\title{Invariant Einstein  metrics on basic classical  Lie supergroups}
\author{Huihui An,  Zaili Yan$^{*}$ \and Shaoxiang Zhang}
\address[Huihui An]{ School of Mathematics, Liaoning Normal
University, Dalian, Liaoning Province, 116029, People's Republic of China}
\email[]{anhh@lnnu.edu.cn }
\address[Zaili Yan]{School of Mathematics and Statistics, Ningbo University, Ningbo, Zhejiang Province, 315211,  People's Republic of China}
\email[]{yanzaili@nbu.edu.cn}
\address[Shaoxiang Zhang]{College of Mathematics and Systems Science, Shandong University of Science and Technology, Qingdao 266590, People's Republic of China}
\email[]{zhangshaoxiang93@163.com}
\thanks{H. An is supported by special fund for basic scientific research expenses of universities in Liaoning Province (LJ212410165012). $^{*}$Z. Yan is the corresponding author and is supported by Zhejiang Provincial Natural Science Foundation of China under Grant No. LMS25A010010. S. Zhang is partially supported by National Natural Science Foundation of China (No.12201358), Science and Technology Support Plan for Youth Innovation of Colleges and Universities of Shandong Province of China (No. 2023KJ090).}
\date{}
\newtheorem{thm}{Theorem}[section]
\newtheorem{prop}[thm]{Proposition}
\newtheorem{lem}[thm]{Lemma}
\newtheorem{cor}[thm]{Corollary}
\theoremstyle{definition}
\newtheorem{defn}[thm]{Definition}
\newtheorem{rem}[thm]{Remark}
\begin{document}

\maketitle
\begin{abstract}
This paper presents a systematic study of invariant Einstein metrics on basic classical Lie supergroups, whose Lie superalgebras belong to the Kac's classification of finite dimensional classical simple Lie superalgebras over $\mathbb{R}$. We consider a natural family of left invariant metrics parameterized by scaling factors on the simple and Abelian components of the reductive even part, using the canonical bi-invariant bilinear form. Explicit expressions for the Levi-Civita connection and Ricci tensor are derived, and the Einstein condition is reduced to a solvable algebraic system.
Our main result shows that, except for the cases of $\mathbf{A}(m,n)$ with $m\neq n$, $\mathbf{F}(4)$,  and their real forms, every  real basic classical Lie superalgebra
admits at least  two distinct Einstein metrics.
Notably, for $\mathbf{D}(n+1,n)$ and $\mathbf{D}(2,1;\alpha)$, we obtain both Ricci flat and non Ricci flat Einstein metrics, a phenomenon not observed in the non-super setting.

\medskip
\textbf{Mathematics Subject Classification 2020}:   53C25, 58A50, 17B20.

\medskip
\textbf{Key words}: Einstein metrics; classical Lie superalgebras;  Casimir operators.

\end{abstract}

\section{Introduction}
The investigation of Einstein metrics, defined by the condition $\mathrm{ric}=c\langle\cdot, \cdot\rangle$ for some constant $c\in\mathbb{R}$, constitutes a profound topic in differential geometry with deep ties to physics, particularly general relativity and supergravity. In the homogeneous setting, especially on Lie groups and their generalizations, the existence and classification of invariant Einstein metrics have been extensively studied,
 fostering rich interactions among geometry, algebra, and analysis \cite{bk23,bl23,bwz04,heb98,Jab23,lau10}.
 The natural extension of this framework to the supersymmetric context: Lie supergroups and homogeneous supermanifolds,
  has opened new avenues of research, blending techniques from representation theory of Lie superalgebras, differential supergeometry, and mathematical physics.
  More recently,  Zhang, Gould, Pulemotov and Rasmussen in \cite{ZGPR24} initiated the study of the Einstein equation on homogeneous supermanifolds, deriving explicit formulas for the Ricci curvature and providing a complete classification of Einstein metrics on certain flag supermanifolds.

In the spirit of \cite{dz79} on left invariant naturally reductive Einstein metrics on compact Lie groups, this paper presents a systematic investigation of invariant Einstein metrics on basic classical Lie supergroups.
 These are Lie supergroups whose Lie superalgebras belong to the celebrated classification by Kac \cite{Kac} of finite dimensional classical simple Lie superalgebras over $\mathbb{R}$. Specifically, we focus on the families $\mathbf{A}(m,n)$, $\mathbf{B}(m,n)$, $\mathbf{C}(n)$, $\mathbf{D}(m,n)$, $\mathbf{D}(2,1;\alpha)$, $\mathbf{F}(4)$, and $\mathbf{G}(3)$, along with their real forms. A defining feature  of these Lie superalgebras is the existence of a non-degenerate even supersymmetric bi-invariant bilinear form $\mathbf{B}$. This form generalizes the Killing form and serves as the foundational structure for constructing invariant Einstein metrics on the corresponding Lie supergroups.
We refer the readers to \cite{ABB09,BB99,Benayadi00} for related results on non-simple quadratic Lie superalgebras.

Let $\mathfrak{g}=\mathfrak{g}_{\bar{0}}+\mathfrak{g}_{\bar{1}}$
be  a real Lie superalgebra endowed with a bi-invariant metric $\mathbf{B}$.
Assume that $\mathfrak{g}_{\bar{0}}$ is reductive, so it decomposes into an Abelian part $\mathfrak{k}_{0}$ and simple ideals $\mathfrak{k}_{1},\ldots,\mathfrak{k}_{s}$:
\begin{equation*}
  \mathfrak{g}_{\bar{0}}=\mathfrak{k}_{0}\oplus\mathfrak{k}_{1}\oplus\cdots\oplus\mathfrak{k}_{s}.
\end{equation*}
On the associated Lie supergroup $G$, we consider a family of left invariant metrics of the form:
\begin{equation}\label{Einstein-metric-form}
\langle\cdot,\cdot\rangle
 =x_{0}\mathbf{B}|_{\mathfrak{k}_{0}}+x_{1}\mathbf{B}|_{\mathfrak{k}_{1}}+\cdots+x_{s}\mathbf{B}|_{\mathfrak{k}_{s}}
 +\mathbf{B}|_{\mathfrak{g}_{\bar{1}}}, \quad x_{i}\in \mathbb{R}^{\ast}.
\end{equation}
This family is natural in that it respects the decomposition of $\mathfrak{g}$ and employs the canonical bilinear form $\mathbf{B}$. The parameters $x_{i}$ allow independent scaling of the metric on each component.
We derive explicit formulas for the Levi-Civita connection and Ricci tensor (see Lemma \ref{3-lem-levi-civita} and Proposition \ref{3-prop-ric-tensor}),
along with a necessary and sufficient condition for $\langle\cdot,\cdot\rangle$ to be Einstein. In particular, when $\mathfrak{g}$ is basic classical, $\mathbf{B}=\mathbf{K}$, the Killing form of $\mathfrak{g}$, and $\mathbf{K}$ is non-degenerate,  the Einstein condition $\mathrm{ric}=c\langle\cdot, \cdot\rangle$ reduces to the following system of algebraic equations in the parameters $x_i$ and $c$ (Theorem \ref{4-thm-B=K}):
\begin{eqnarray*}
\left\{
\begin{aligned}
&c=-\frac{x_{0}}{4},\quad \mathrm{if}\, \mathfrak{k}_{0}\neq0, \\
&\frac{1}{4}(l_{i}x_{i}^{2}-1)=c (1-l_{i})x_{i},\quad 1\leq i\leq s,\\
&-\frac{\mathrm{dim}\,\mathfrak{k}_{0}}{\mathrm{dim}\,\mathfrak{g}_{\bar{1}}}x_{0}
+\sum\limits_{i=1}^{s}\frac{l_{i}\mathrm{dim}\,\mathfrak{k}_{i}}{(1-l_{i})\mathrm{dim}\,\mathfrak{g}_{\bar{1}}}x_{i}=2c+1,
\end{aligned}\right.
\end{eqnarray*}
where $l_{i}$ is the index of the representation of $\mathfrak{k}_{i}$
on $\mathfrak{g}_{\bar{1}}$.

By analyzing the Einstein equation  for each type of basic classical Lie superalgebra, we prove our  main theorem.
\begin{thm}\label{1-thm-main}
Except for the cases of $\mathbf{A}(m,n)$ with $m\neq n$, $\mathbf{F}(4)$,  and their real forms, every  real basic classical Lie superalgebra
admits at least  two distinct Einstein metrics of the form \eqref{Einstein-metric-form}.
\end{thm}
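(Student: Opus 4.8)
The plan is to reduce Theorem \ref{1-thm-main} to a finite case analysis of the algebraic system attached to each basic classical type, and then to count its admissible real solutions. The first step is to split the types according to whether the Killing form $\mathbf{K}$ is non-degenerate. For $\mathbf{A}(m,n)$ with $m\neq n$, $\mathbf{B}(m,n)$, $\mathbf{C}(n)$, $\mathbf{D}(m,n)$ with $m\neq n+1$, $\mathbf{F}(4)$ and $\mathbf{G}(3)$, the form $\mathbf{K}$ is non-degenerate and one may take $\mathbf{B}=\mathbf{K}$, so Theorem \ref{4-thm-B=K} applies directly. For the three types $\mathbf{A}(n,n)$, $\mathbf{D}(n+1,n)$ and $\mathbf{D}(2,1;\alpha)$ the Killing form vanishes identically, so Theorem \ref{4-thm-B=K} does not apply; here I would instead feed a fixed non-degenerate even supersymmetric $\mathbf{B}$ into the general Ricci formula of Proposition \ref{3-prop-ric-tensor} and derive the analogue of that system, in which the role of $(1-l_i)$ is played by the ratio of $\mathbf{B}|_{\mathfrak{k}_i}$ to the still non-degenerate Killing form $\mathbf{K}_{\mathfrak{k}_i}$ of each simple ideal. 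In either regime the concrete input is the same: the decomposition $\mathfrak{g}_{\bar{0}}=\mathfrak{k}_0\oplus\mathfrak{k}_1\oplus\cdots\oplus\mathfrak{k}_s$, the dimensions $\dim\mathfrak{k}_i$, $\dim\mathfrak{g}_{\bar{1}}$, and the indices $l_i$, all of which I would tabulate from Kac's structure theory (an abelian summand $\mathfrak{k}_0$ occurs only for $\mathbf{A}(m,n)$ with $m\neq n$ and for $\mathbf{C}(n)$).

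With this data fixed, the second step is to solve the resulting system type by type. Because it involves only the numbers $\dim\mathfrak{k}_i$, $\dim\mathfrak{g}_{\bar{1}}$ and $l_i$, which coincide for a complex basic classical Lie superalgebra and each of its real forms, it suffices to treat the complex types; the real forms then inherit the same count of real solutions $(x_0,\dots,x_s,c)$. The system has $s+2$ unknowns and $s+2$ equations (one fewer of each when $\mathfrak{k}_0=0$) and is zero-dimensional. When $\mathfrak{k}_0\neq 0$ I first eliminate $x_0=-4c$. Each simple ideal contributes a quadratic $l_i x_i^2-4c(1-l_i)x_i-1=0$, which for $l_i\neq 1$ and $x_i\neq 0$ I rewrite as $4c=(l_i x_i^2-1)/\bigl((1-l_i)x_i\bigr)$; equating these expressions and combining with the linear balance equation eliminates all but one variable and leaves a single univariate polynomial whose admissible real roots (those with every $x_i\in\mathbb{R}^{\ast}$) index the Einstein metrics. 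For the degenerate types the same elimination applies to the $\mathbf{B}$-analogue of the system.

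The shape of this univariate polynomial is what distinguishes the cases, and verifying its root count is the crux. For $\mathbf{A}(n,n)$, $\mathbf{B}(m,n)$, $\mathbf{C}(n)$, $\mathbf{D}(m,n)$, $\mathbf{D}(2,1;\alpha)$ and $\mathbf{G}(3)$ I expect at least two distinct admissible real roots, confirmed by explicit factoring or a discriminant/sign argument on the concrete coefficients. For $\mathbf{D}(n+1,n)$ and $\mathbf{D}(2,1;\alpha)$ I would additionally pin down the Ricci-flat branch by hand: since these have no abelian summand, $c=0$ is allowed, the quadratics collapse to $x_i=\pm 1/\sqrt{l_i}$, and one checks that a sign choice satisfies the linear balance equation, producing a Ricci-flat metric alongside a non-flat one---this is the source of the phenomenon highlighted in the abstract. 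The genuinely hard part, and the only place the argument goes beyond bookkeeping, is the negative direction for $\mathbf{A}(m,n)$ with $m\neq n$ and for $\mathbf{F}(4)$: there I must show the reduced polynomial has at most one admissible real root, which means ruling solutions out rather than exhibiting them and is delicate because it hinges on the precise arithmetic of the indices and on the constraints $x_i\in\mathbb{R}^{\ast}$. I would settle these two by writing the reduced polynomial explicitly and analyzing its discriminant together with the admissibility constraints, checking that every candidate root but one is non-real or inadmissible.
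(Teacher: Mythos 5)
Your proposal follows essentially the same route as the paper: reduce to the algebraic system of Theorem \ref{4-thm-B=K} (or its $\mathbf{B}$-analogue via Theorem \ref{3-thm-main} and Lemma \ref{4-lem-Casimir} when $\mathbf{K}\equiv 0$), pass from real forms to complex types using the index invariance of Proposition \ref{2-prop-index} and the reduction of Proposition \ref{4-prop-solution-real-complex}, and then count admissible real roots of the resulting univariate polynomial type by type (the paper carries out this last step with Maple). One small caveat: real forms do not literally ``inherit the same count'' of solutions --- by Proposition \ref{4-prop-solution-real-complex} a real form only sees the complex solutions lying on the relevant diagonal when a real simple ideal complexifies to a sum of two ideals --- but since every solution produced in the case analysis is symmetric in the fused coordinates, the existence of two metrics survives the descent.
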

\begin{rem}
For both $\mathbf{D}(n+1,n)$ and $\mathbf{D}(2,1;\alpha)$, as well as their real forms, we obtain Einstein metrics with both vanishing and non-vanishing scalar curvature ($c=0$ and $c\neq0$). This phenomenon may not occur in the non-super setting, as it remains unknown whether a simple Lie algebra admits a Ricci flat metric.
\end{rem}

The paper is organized as follows. In Section \ref{sect2}, we review necessary background on Lie superalgebras. In Section \ref{sect3}, we derive the general Einstein equations for the chosen family of metrics. Section \ref{sect4} is devoted to the classification of solutions for each type of basic classical Lie superalgebra.

\section{Lie superalgebras}\label{sect2}

A Lie superalgebra is a vector superspace $\mathfrak{g}=\mathfrak{g}_{\bar{0}}+\mathfrak{g}_{\bar{1}}$ (over $\mathbb{R}$ or $\mathbb{C}$) equipped with a Lie superbracket $[\cdot,\cdot]$ satisfying
\begin{eqnarray*}
\left\{
\begin{aligned}
&[X,Y]=-(-1)^{[X][Y]}[Y,X],\\
&[X,[Y,Z]]=[[X,Y],Z]+(-1)^{[X][Y]}[Y,[X,Z]],\quad \forall X,Y,Z\in\mathfrak{g},
\end{aligned}
\right.
\end{eqnarray*}
where $[X]\in \mathbb{Z}_{2}$ denotes the parity of a homogeneous element $X\in\mathfrak{g}$.
Throughout this paper, we assume $\mathfrak{g}$ is non-trivial, i.e., $\mathfrak{g}_{\bar{1}}\neq 0$.
 All vectors $X\in \mathfrak{g}$ in an expression are assumed to be homogeneous, i.e., $X\in\mathfrak{g}_{\bar{0}}\cup \mathfrak{g}_{\bar{1}}$, and then the expression extends to the other elements by linearity.

A finite dimensional Lie superalgebra $\mathfrak{g}=\mathfrak{g}_{\bar{0}}+\mathfrak{g}_{\bar{1}}$ is called classical if $\mathfrak{g}$ is simple and the representation of $\mathfrak{g}_{\bar{0}}$ on $\mathfrak{g}_{\bar{1}}$ is completely reducible. A bilinear form $f$ on $\mathfrak{g}$ is called even if $f(\mathfrak{g}_{\bar{0}},\mathfrak{g}_{\bar{1}})=0$, supersymmetric if $f(X,Y)=(-1)^{[X][Y]}f(Y,X)$, and bi-invariant if $f([X,Y],Z)=f(X,[Y,Z])$.
We say  a classical Lie superalgebra is basic if it admits a non-degenerate
even supersymmetric bi-invariant bilinear form.

The Killing form $\mathbf{K}$ of a finite dimensional Lie superalgebra $\mathfrak{g}$ is defined by $\mathbf{K}(X,Y)=\mathrm{str}(\mathrm{ad}\,X\circ\mathrm{ad}\,Y)$ for all $X,Y\in\mathfrak{g}$,
 where $\mathrm{ad}\,X(Y):=[X,Y]$ and  $\mathrm{str}$ denotes the supertrace.
 For a  matrix
$\left(\begin{array}{cc}
A&B\\
 C&D
\end{array}\right)$
 in $\mathrm{End}(\mathfrak{g})$,
the supertrace is given by $\mathrm{tr}\,A-\mathrm{tr}\,D$. Note that the Killing form $\mathbf{K}$ is even, supersymmetric,  bi-invariant,  but not necessarily non-degenerate. The following result is due to Kac \cite{Kac}.
\begin{thm}
A complex basic classical Lie superalgebra  is isomorphic to one of $\mathbf{A}(m,n)$, $\mathbf{B}(m,n)$, $\mathbf{C}(n)$, $\mathbf{D}(m,n)$, $\mathbf{D}(2,1;\alpha)$, $\mathbf{F}(4)$ or $\mathbf{G}(3)$.
Every real basic classical Lie superalgebra is isomorphic  either to a complex basic classical Lie superalgebra (regarded as a real superalgebra) or to  a real form of such a  complex superalgebra.
In particular, the Killing forms of $\mathbf{A}(n,n)$, $\mathbf{D}(n+1,n)$ and $\mathbf{D}(2,1;\alpha)$ are identically zero.
\end{thm}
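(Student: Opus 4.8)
This statement bundles two very different tasks: a deep structural classification (the first two assertions, due to Kac) and a concrete computation (the vanishing of the three Killing forms). I would treat them separately.

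For the complex classification, I would build the super analogue of root-space theory. Because $\mathfrak{g}$ is basic classical it carries a non-degenerate even supersymmetric invariant form $\mathbf{B}$, from which one deduces that $\mathfrak{g}_{\bar{0}}$ is reductive and that a Cartan subalgebra $\mathfrak{h}$ (a maximal toral subalgebra of $\mathfrak{g}_{\bar{0}}$) acts diagonalizably on all of $\mathfrak{g}$. This yields a decomposition $\mathfrak{g}=\mathfrak{h}\oplus\bigoplus_{\alpha}\mathfrak{g}_{\alpha}$ whose roots split into even and odd according to the parity of $\mathfrak{g}_{\alpha}$. The form $\mathbf{B}$ transports to a nondegenerate pairing on $\mathfrak{h}^{*}$, giving each root a length; the genuinely new feature, absent in the ordinary theory, is that an odd root may be isotropic, $(\alpha,\alpha)=0$.

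The heart of the argument is then to choose a base of simple roots, encode the data in a Cartan matrix marked by a $\mathbb{Z}_{2}$-grading that records the odd simple roots, and classify the admissible such matrices. One shows that every rank-two subdatum is severely constrained and that the whole root system is assembled from these, so the list of possibilities is finite: the infinite families $\mathbf{A}(m,n)$, $\mathbf{B}(m,n)$, $\mathbf{C}(n)$, $\mathbf{D}(m,n)$ arise as the super analogues of the series $\mathfrak{sl}$ and $\mathfrak{osp}$, while the sporadic $\mathbf{D}(2,1;\alpha)$, $\mathbf{F}(4)$, $\mathbf{G}(3)$ come from low-rank exceptional matrices. A Serre-type presentation recovers $\mathfrak{g}$ from its Cartan matrix, proving that the list is complete and its members pairwise non-isomorphic. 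I expect this rank-by-rank elimination to be the main obstacle: unlike in the semisimple case one must carry the continuous parameter $\alpha$ through $\mathbf{D}(2,1;\alpha)$ and rule out spurious isotropic configurations. For the real forms I would follow the classical template, realizing each real basic classical Lie superalgebra either as a complex one viewed over $\mathbb{R}$ or as the fixed-point set of an antilinear conjugation of a complex one, and classifying such conjugations up to conjugacy via their action on the root datum (a super analogue of Satake--Vogan diagrams); matching the outcome against the complex list yields the asserted dichotomy.

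Finally, the vanishing of the Killing forms is a computation once the root data are available. On a basic classical Lie superalgebra $\mathbf{K}$ is a scalar multiple of $\mathbf{B}$, the scalar being proportional to the dual Coxeter number $h^{\vee}$ (equivalently, the value of the Casimir on the adjoint module). Evaluating $\mathrm{str}(\mathrm{ad}\,X\circ\mathrm{ad}\,X)$ on a regular element $X\in\mathfrak{h}$ reduces $\mathbf{K}(X,X)$ to the weighted sum $\sum_{\alpha}(-1)^{[\alpha]}\alpha(X)^{2}$ over the roots, in which even roots enter with a plus sign and odd roots with a minus. Computing $h^{\vee}$ for each series shows it vanishes exactly for $\mathbf{A}(n,n)$, $\mathbf{D}(n+1,n)$ and $\mathbf{D}(2,1;\alpha)$, where the even and odd root contributions cancel because the corresponding super-dimensions coincide; for all other types $h^{\vee}\neq0$ and $\mathbf{K}$ is nondegenerate. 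This last step is routine relative to the classification itself.
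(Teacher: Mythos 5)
The paper does not prove this theorem at all: it is stated as a quotation of Kac's classification and justified only by the citation \cite{Kac}, so there is no internal argument to measure your attempt against. Your outline is essentially the standard proof strategy from the literature, and as a high-level sketch it is sound: the existence of the non-degenerate even invariant form forces $\mathfrak{g}_{\bar{0}}$ to be reductive and acting diagonalizably, one passes to a root decomposition with possibly isotropic odd roots, and the admissible (marked) Cartan matrices are enumerated rank by rank, yielding the two infinite $\mathfrak{sl}$- and $\mathfrak{osp}$-type families together with $\mathbf{D}(2,1;\alpha)$, $\mathbf{F}(4)$, $\mathbf{G}(3)$. Two caveats are worth recording. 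First, Kac's own classification of classical Lie superalgebras proceeds primarily by analyzing $\mathfrak{g}_{\bar{1}}$ as a $\mathfrak{g}_{\bar{0}}$-module (faithful, and irreducible or a sum of two irreducibles), with the contragredient/Cartan-matrix machinery reserved for the basic ones; moreover the complete classification of \emph{real} forms by antilinear involutions, which you invoke, is not actually carried out in \cite{Kac} but in later work, so that part of your sketch rests on more than the cited source. Second, your explanation of the Killing-form vanishing is imprecise: the correct invariant is the dual Coxeter number, i.e.\ the proportionality constant between $\mathbf{K}$ and the supertrace form ($2(m-n)$ for $\mathfrak{sl}(m{+}1|n{+}1)$ and $2(m-n-1)$ for $\mathfrak{osp}(2m|2n)$), not a coincidence of super-dimensions --- e.g.\ for $\mathbf{A}(n,n)$ one has $\dim\mathfrak{g}_{\bar{0}}=2n^{2}+4n\neq 2(n+1)^{2}=\dim\mathfrak{g}_{\bar{1}}$, and for $\mathbf{D}(n+1,n)$ the natural module $\mathbb{C}^{2n+2|2n}$ has super-dimension $2$, yet the Killing form vanishes. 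The conclusion you reach is nevertheless correct, and this final step is, as you say, routine once the root data are in hand.
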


We now recall the notion of the index of a representation of a simple Lie algebra. Let $\mathfrak{k}$ be a finite dimensional simple Lie algebra over $\mathbb{R}$ or $\mathbb{C}$,  and let  $\rho:\mathfrak{k}\rightarrow \mathfrak{gl}(V)$ be a finite dimensional representation. By  Schur's lemma, we have
\begin{eqnarray*}
\mathrm{tr}\,\rho(X)\rho(Y)=l_{\rho}\mathrm{tr}\,(\mathrm{ad}\,X\circ\mathrm{ad}\,Y),
\end{eqnarray*}
where $l_{\rho}\in \mathbb{R}$ is  independent of $X$ and $Y$,  called the index of $\rho$. If $\rho=\rho_{1}+\rho_{2}$ is a direct sum, then $l_{\rho}=l_{\rho_{1}}+l_{\rho_{2}}$.
Also $l_{\rho}=l_{\rho^{*}}$, where $\rho^{*}$ is  the contragredient representation of  $\rho$.
 Table 1 lists the indices of standard representations of  simple Lie algebras,
 see \cite[Table III]{Kac}.
\begin{table}[htbp]\label{table1}
\caption{Indices of standard representations}
\centering
\setlength\tabcolsep{12 pt}
\renewcommand{\arraystretch}{1.3}
\begin{tabular}{|c|c|c|}
\hline
$\rho$                &  $\mathrm{dim}\,V$    &  $l_{\rho}$\\
\hline
$\mathfrak{sl}(n)$    &    $n$                &  $\frac{1}{2n}$\\
\hline
$\mathfrak{so}(n)$    &    $n$                &  $\frac{1}{n-2}$\\
\hline
$\mathfrak{spin}(7)$  &    $8$                &  $\frac{1}{5}$\\
\hline
$\mathfrak{sp}(n)$    &    $n$                &  $\frac{1}{n+2}$\\
\hline
$\mathfrak{g}_{2}$    &    $7$                &  $\frac{1}{4}$\\
\hline
\end{tabular}
\end{table}
\\Here $\mathfrak{spin}(7)$ stands for the irreducible spinor representation of $\mathfrak{so}(7)$ and $\mathfrak{g}_{2}$ refers to the simplest representation of the 14-dimensional simple Lie algebra $\mathfrak{g}_{2}$.
\begin{prop}\label{2-prop-index}
Let $\mathfrak{g}=\mathfrak{g}_{\bar{0}}+\mathfrak{g}_{\bar{1}}$ be a real Lie superalgebra with $\mathfrak{g}_{\bar{0}}$ reductive, and let $\mathfrak{g}\otimes\mathbb{C}=\mathfrak{g}_{\bar{0}}\otimes\mathbb{C}+\mathfrak{g}_{\bar{1}}\otimes\mathbb{C}$ be its complexification.  Let $\mathfrak{k}\subset \mathfrak{g}_{\bar{0}}$ be a real simple ideal.

(i) Denote by $l_{\mathfrak{k}}$ and $l_{\mathfrak{k}\otimes\mathbb{C}}$  the indices of the representations of $\mathfrak{k}$ and $\mathfrak{k}\otimes\mathbb{C}$ on $\mathfrak{g}_{\bar{1}}$ and $\mathfrak{g}_{\bar{1}}\otimes\mathbb{C}$ respectively, then $l_{\mathfrak{k}}=l_{\mathfrak{k}\otimes\mathbb{C}}$.

(ii) If $\mathfrak{g}$ is a complex Lie superalgebra regarded as a real Lie superalgebra, and $l_{\mathfrak{k}}^{\mathbb{C}}$ is the index of the representation of the complex simple Lie algebra $\mathfrak{k}$ on $\mathfrak{g}_{\bar{1}}$ (viewed as a complex vector space), then $l_{\mathfrak{k}}=l_{\mathfrak{k}}^{\mathbb{C}}$.
\end{prop}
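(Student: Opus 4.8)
The plan is to read off everything from the defining relation of the index, $\mathrm{tr}\,\rho(X)\rho(Y)=l_{\rho}\,\mathbf{K}(X,Y)$, and to notice that both sides are constructed from traces of endomorphisms: the left-hand side from $\rho(X)\rho(Y)$ and the right-hand side (the Killing form) from $\mathrm{ad}\,X\circ\mathrm{ad}\,Y$. Consequently the entire proposition reduces to tracking how the trace of a linear operator behaves under the two operations at play, namely extension of scalars from a real space $V$ to $V\otimes_{\mathbb{R}}\mathbb{C}$ for part (i), and restriction of scalars (viewing a complex space as its underlying real space) for part (ii). In each case I would apply the relevant trace identity simultaneously to the trace form and to the Killing form, and then cancel the common factor in the ratio that defines the index.

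For part (i), the one fact I need is that if $A$ is an $\mathbb{R}$-linear endomorphism of $V$ with complexification $A\otimes\mathbb{C}$, then $\mathrm{tr}_{\mathbb{R}}A=\mathrm{tr}_{\mathbb{C}}(A\otimes\mathbb{C})$, since any real basis of $V$ is a complex basis of $V\otimes\mathbb{C}$ in which the two operators have the same matrix. Applying this to $A=\rho(X)\rho(Y)$ for $X,Y\in\mathfrak{k}$ gives $\mathrm{tr}_{\mathbb{R}}\,\rho(X)\rho(Y)=\mathrm{tr}_{\mathbb{C}}\,(\rho\otimes\mathbb{C})(X)(\rho\otimes\mathbb{C})(Y)$, and applying it to $A=\mathrm{ad}\,X\circ\mathrm{ad}\,Y$ shows that $\mathbf{K}_{\mathfrak{k}}$ is the restriction to $\mathfrak{k}$ of $\mathbf{K}_{\mathfrak{k}\otimes\mathbb{C}}$. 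Substituting both identities into the defining relation for $\mathfrak{k}$ and for $\mathfrak{k}\otimes\mathbb{C}$ and evaluating on real $X,Y$ yields $l_{\mathfrak{k}}\,\mathbf{K}_{\mathfrak{k}}(X,Y)=l_{\mathfrak{k}\otimes\mathbb{C}}\,\mathbf{K}_{\mathfrak{k}}(X,Y)$, and the non-degeneracy of $\mathbf{K}_{\mathfrak{k}}$ (valid since $\mathfrak{k}$ is simple) forces $l_{\mathfrak{k}}=l_{\mathfrak{k}\otimes\mathbb{C}}$.

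For part (ii), the corresponding bookkeeping is that for a $\mathbb{C}$-linear endomorphism $A$ of a complex space $W$, the underlying $\mathbb{R}$-linear operator $A_{\mathbb{R}}$ satisfies $\mathrm{tr}_{\mathbb{R}}A_{\mathbb{R}}=2\,\mathrm{Re}\,\mathrm{tr}_{\mathbb{C}}A$ (as one checks on $W=\mathbb{C}$ with $A$ multiplication by $a+bi$, whose real matrix has trace $2a$). Applying this to $\rho(X)\rho(Y)$ and to $\mathrm{ad}\,X\circ\mathrm{ad}\,Y$ gives $\mathrm{tr}_{\mathbb{R}}\,\rho(X)\rho(Y)=2\,\mathrm{Re}\,\mathrm{tr}_{\mathbb{C}}\,\rho(X)\rho(Y)$ and $\mathbf{K}_{\mathfrak{k}}^{\mathbb{R}}=2\,\mathrm{Re}\,\mathbf{K}_{\mathfrak{k}}^{\mathbb{C}}$. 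Since $l_{\mathfrak{k}}^{\mathbb{C}}$ is a real number, it commutes with $\mathrm{Re}$, so $2\,\mathrm{Re}\bigl(l_{\mathfrak{k}}^{\mathbb{C}}\mathbf{K}_{\mathfrak{k}}^{\mathbb{C}}\bigr)=l_{\mathfrak{k}}^{\mathbb{C}}\bigl(2\,\mathrm{Re}\,\mathbf{K}_{\mathfrak{k}}^{\mathbb{C}}\bigr)=l_{\mathfrak{k}}^{\mathbb{C}}\,\mathbf{K}_{\mathfrak{k}}^{\mathbb{R}}$; comparing with $\mathrm{tr}_{\mathbb{R}}\,\rho(X)\rho(Y)=l_{\mathfrak{k}}\,\mathbf{K}_{\mathfrak{k}}^{\mathbb{R}}$ and using non-degeneracy of the real Killing form gives $l_{\mathfrak{k}}=l_{\mathfrak{k}}^{\mathbb{C}}$.

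I expect the only genuinely delicate point to be the reality of $l_{\mathfrak{k}}^{\mathbb{C}}$ in part (ii), which is exactly what lets the factor $2\,\mathrm{Re}$ occurring in both forms cancel. I would justify it either by citing the rational values in Table 1 or, intrinsically, by restricting $\rho$ to a compact real form of $\mathfrak{k}$, on which $\rho(X)$ is skew-Hermitian and both $\mathrm{tr}_{\mathbb{C}}\,\rho(X)^2$ and $\mathbf{K}_{\mathfrak{k}}^{\mathbb{C}}(X,X)$ are real. The reason this cannot be skipped is that, for $\mathfrak{k}$ regarded as a real (simple but not absolutely simple) Lie algebra, $\mathrm{Im}\,\mathbf{K}_{\mathfrak{k}}^{\mathbb{C}}$ is an independent invariant symmetric $\mathbb{R}$-bilinear form, so a non-real $l_{\mathfrak{k}}^{\mathbb{C}}$ would produce a spurious $\mathrm{Im}\,\mathbf{K}_{\mathfrak{k}}^{\mathbb{C}}$-component in the real trace form and break the identification. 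A minor point in part (i) is that it is applied precisely when $\mathfrak{k}$ is a real form of a complex simple Lie algebra, so that $\mathfrak{k}\otimes\mathbb{C}$ is simple and $l_{\mathfrak{k}\otimes\mathbb{C}}$ is unambiguously defined by Schur's lemma.
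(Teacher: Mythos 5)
Your argument is correct and is essentially the paper's own proof: both reduce the claim to the behaviour of traces under extension of scalars (for (i)) and restriction of scalars with the factor $2\,\mathrm{Re}$ (for (ii)), and then cancel against the non-degenerate Killing form $\mathbf{K}_{\mathfrak{k}}$; the only cosmetic difference is that you apply these identities directly to the trace form $\mathrm{tr}\,\rho(X)\rho(Y)=l_{\mathfrak{k}}\mathbf{K}_{\mathfrak{k}}(X,Y)$, whereas the paper routes them through the restricted super-Killing form $\mathbf{K}|_{\mathfrak{k}}=(1-l_{\mathfrak{k}})\mathbf{K}_{\mathfrak{k}}$. Your explicit justification that $l_{\mathfrak{k}}^{\mathbb{C}}$ is real is a point the paper leaves implicit (it builds $l_{\rho}\in\mathbb{R}$ into the definition), and is a worthwhile addition.
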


\begin{proof}
(\romannumeral1) Let $\mathbf{K}_{\mathfrak{g}\otimes\mathbb{C}}$, $\mathbf{K}_{\mathfrak{k}}$ and $\mathbf{K}_{\mathfrak{k}\otimes\mathbb{C}}$ denote the Killing forms of $\mathfrak{g}\otimes\mathbb{C}$, $\mathfrak{k}$ and $\mathfrak{k}\otimes\mathbb{C}$, respectively. Then we have
\begin{eqnarray*}
\mathbf{K}(X,Y)=\mathbf{K}_{\mathfrak{g}\otimes\mathbb{C}}(X,Y), \quad  \mathbf{K}_{\mathfrak{k}}(X,Y)=\mathbf{K}_{\mathfrak{k}\otimes\mathbb{C}}(X,Y), \quad \forall X,Y\in \mathfrak{k}.
\end{eqnarray*}
Hence $(1-l_{\mathfrak{k}})\mathbf{K}_{\mathfrak{k}}(X,Y)
=(1-l_{\mathfrak{k}\otimes\mathbb{C}})\mathbf{K}_{\mathfrak{k}\otimes\mathbb{C}}(X,Y)$, and thus $l_{\mathfrak{k}}=l_{\mathfrak{k}\otimes\mathbb{C}}$.

(\romannumeral2) Let $\mathbf{K}_{\mathfrak{g}}^{\mathbb{C}}$ and $\mathbf{K}_{\mathfrak{k}}^{\mathbb{C}}$ denote the Killing forms of the  complex Lie superalgebra $\mathfrak{g}$ and the complex Lie algebra $\mathfrak{k}$, respectively. Then
\begin{eqnarray*}
\mathbf{K}(X,Y)=2\mathrm{Re}(\mathbf{K}_{\mathfrak{g}}^{\mathbb{C}}(X,Y)),\quad
\mathbf{K}_{\mathfrak{k}}(X,Y)=2\mathrm{Re}(\mathbf{K}_{\mathfrak{k}}^{\mathbb{C}}(X,Y)),
\quad \forall X,Y\in\mathfrak{k}.
\end{eqnarray*}
Therefore
\begin{eqnarray*}
(1-l_{\mathfrak{k}})\mathbf{K}_{\mathfrak{k}}(X,Y)=2\mathrm{Re}\Big{(}(1-l_{\mathfrak{k}}^{\mathbb{C}})\mathbf{K}_{\mathfrak{k}}^{\mathbb{C}}(X,Y)\Big{)},
\end{eqnarray*}
which implies $l_{\mathfrak{k}}=l_{\mathfrak{k}}^{\mathbb{C}}$.
\end{proof}

In Table 2,  we list  the indices $l_{i}$ of the representation of each simple ideal $\mathfrak{k}_{i}$ in $\mathfrak{g}_{\bar{0}}$ on $\mathfrak{g}_{\bar{1}}$, for $\mathfrak{g}$  a complex basic classical Lie superalgebra with  $\mathfrak{g}_{\bar{0}}=\mathfrak{k}_{0}\oplus\mathfrak{k}_{1}\oplus\cdots\oplus\mathfrak{k}_{s}$,
where  $\mathfrak{k}_{0}$ is Abelian and $\mathfrak{k}_{i}$ ($1\leq i\leq s$) are simple ideals.
 By Proposition \ref{2-prop-index}, we can readily obtain the corresponding indices for real basic classical Lie superalgebras.
\begin{table}[htbp]
\caption{Indices of basic classical Lie superalgebras }
\centering
\label{T2}
\small{
\setlength{\tabcolsep}{1mm}{
\begin{tabular}{|c|c|c|c|}
\hline
$\mathfrak{g}$     &  $\mathfrak{g}_{\bar{0}}$    &  $\mathfrak{g}_{\bar{0}}|\mathfrak{g}_{\bar{1}}$  & $l_{i}$\\
\hline
$\mathbf{A}(m,n)$  &  $\mathbb{C}\oplus \mathbf{A}_{m}\oplus \mathbf{A}_{n}$  &  \makecell{$\mathbb{C}\otimes\mathfrak{sl}(m+1)\otimes\mathfrak{sl}(n+1)$\\
                                                                                           $+(\mathbb{C}\otimes\mathfrak{sl}(m+1)\otimes\mathfrak{sl}(n+1))^{\ast}$}
                                                                              &  \makecell{$l_{0}=\infty$\\
                                                                                           $l_{1}=\frac{n+1}{m+1}$\\
                                                                                           $l_{2}=\frac{m+1}{n+1}$}
\\
\hline
$\mathbf{A}(n,n)$  &  $\mathbf{A}_{n}\oplus \mathbf{A}_{n}$                   &  \makecell{$\mathfrak{sl}(n+1)\otimes\mathfrak{sl}(n+1)$\\
                                                                                           $+(\mathfrak{sl}(n+1)\otimes\mathfrak{sl}(n+1))^{\ast}$}
                                                                              &  $l_{1}=l_{2}=1$
\\
\hline
$\mathbf{B}(m,n)$  &  $\mathbf{B}_{m}\oplus \mathbf{C}_{n}$  & $\mathfrak{so}(2m+1)\otimes\mathfrak{sp}(2n)$
                                                                              &  \makecell{$l_{1}=\frac{2n}{2m-1}$\\
                                                                                           $l_{2}=\frac{2m+1}{2n+2}$}
\\
\hline
$\mathbf{C}(n)$  &  $\mathbb{C}\oplus \mathbf{C}_{n-1}$                       &  $\mathbb{C}\otimes\mathfrak{sp}(2n-2)+(\mathbb{C}\otimes\mathfrak{sp}(2n-2))^{\ast}$
                                                                              &  \makecell{$l_{0}=\infty$\\
                                                                                           $l_{1}=\frac{1}{n}$}
\\
\hline
$\mathbf{D}(m,n)$  &  $\mathbf{D}_{m}\oplus \mathbf{C}_{n}$  & $\mathfrak{so}(2m)\otimes\mathfrak{sp}(2n)$
                                                                              &  \makecell{$l_{1}=\frac{n}{m-1}$\\
                                                                                           $l_{2}=\frac{m}{n+1}$}
\\
\hline
$\mathbf{F}(4)$  &  $\mathbf{B}_{3}\oplus \mathbf{A}_{1}$  & $\mathfrak{spin}(7)\otimes\mathfrak{sl}(2)$
                                                                              &  \makecell{$l_{1}=\frac{2}{5}$\\
                                                                                           $l_{2}=2$}
\\
\hline
$\mathbf{G}(3)$  &  $\mathfrak{g}_{2}\oplus \mathbf{A}_{1}$  & $\mathfrak{g}_{2}\otimes\mathfrak{sl}(2)$
                                                                              &  \makecell{$l_{1}=\frac{1}{2}$\\
                                                                                           $l_{2}=\frac{7}{4}$}
\\
\hline
\end{tabular}}}
\end{table}

\section{Einstein equations}\label{sect3}

Let $G$ be a Lie supergroup with Lie superalgebra $\mathfrak{g}$ over $\mathbb{R}$, consisting of left invariant vector fields on $G$.
A left invariant metric $\langle\cdot,\cdot\rangle$ on $G$ is a non-degenerate even supersymmetric bilinear form on $\mathfrak{g}$ (see \cite[Theorem 3]{Goer08}). The Levi-Civita connection $\nabla$ of $(\mathfrak{g},\langle\cdot,\cdot\rangle)$ satisfies
\begin{eqnarray*}
\left\{
\begin{aligned}
&\langle\nabla_{X}Y,Z\rangle+(-1)^{[X][Y]}\langle Y,\nabla_{X}Z\rangle=0,\\
&\nabla_{X}Y-(-1)^{[X][Y]}\nabla_{Y}X=[X,Y],\quad \forall X,Y,Z\in\mathfrak{g}.
\end{aligned}\right.
\end{eqnarray*}
\begin{lem}[\cite{Goer08,ZGPR24}]\label{3-lem-civita}
For all $ X,Y,Z\in\mathfrak{g}$, we have
\begin{eqnarray}\label{Eq1}
2\langle\nabla_{X}Y,Z\rangle=\langle[X,Y],Z\rangle-\langle X,[Y,Z]\rangle-(-1)^{[X][Y]}\langle Y,[X,Z]\rangle.
\end{eqnarray}
\end{lem}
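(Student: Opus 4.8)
The plan is to obtain \eqref{Eq1} as the $\mathbb{Z}_2$-graded analogue of the classical Koszul formula, using only the two displayed defining properties of $\nabla$ together with the left-invariance and supersymmetry of $\langle\cdot,\cdot\rangle$. First I would record that, because $X,Y,Z$ are left-invariant vector fields and $\langle\cdot,\cdot\rangle$ is left-invariant, the function $\langle Y,Z\rangle$ is constant on $G$; hence the super-Leibniz rule $X\langle Y,Z\rangle=\langle\nabla_{X}Y,Z\rangle+(-1)^{[X][Y]}\langle Y,\nabla_{X}Z\rangle$ collapses to precisely the metric-compatibility identity stated above. Writing this identity for the three cyclic arrangements of the triple yields three relations whose left-hand sides all vanish:
\begin{align*}
&\langle\nabla_{X}Y,Z\rangle+(-1)^{[X][Y]}\langle Y,\nabla_{X}Z\rangle=0,\\
&\langle\nabla_{Y}Z,X\rangle+(-1)^{[Y][Z]}\langle Z,\nabla_{Y}X\rangle=0,\\
&\langle\nabla_{Z}X,Y\rangle+(-1)^{[Z][X]}\langle X,\nabla_{Z}Y\rangle=0.
\end{align*}

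Next I would form the graded analogue of the combination ``(first) $+$ (second) $-$ (third)'', inserting the parity prefactors dictated by the transposition rule inside $\langle\cdot,\cdot\rangle$, and then apply the torsion-free identity $\nabla_{U}V=(-1)^{[U][V]}\nabla_{V}U+[U,V]$ to the mixed terms $\langle Y,\nabla_{X}Z\rangle$, $\langle\nabla_{Y}Z,X\rangle$ and $\langle Z,\nabla_{Y}X\rangle$. Each of these splits into a connection term plus a bracket term. By the supersymmetry $\langle U,V\rangle=(-1)^{[U][V]}\langle V,U\rangle$, the connection part coming from $\langle Y,\nabla_{X}Z\rangle$ cancels $\langle\nabla_{Z}X,Y\rangle$, the one from $\langle\nabla_{Y}Z,X\rangle$ cancels $\langle X,\nabla_{Z}Y\rangle$, and the one from $\langle Z,\nabla_{Y}X\rangle$ combines with the original $\langle\nabla_{X}Y,Z\rangle$ to give $2\langle\nabla_{X}Y,Z\rangle$. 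What survives are exactly the three bracket terms, which after a last use of supersymmetry are normalized into the form $\langle[X,Y],Z\rangle$, $\langle X,[Y,Z]\rangle$ and $(-1)^{[X][Y]}\langle Y,[X,Z]\rangle$, producing \eqref{Eq1}.

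The mathematical content is entirely in the $\mathbb{Z}_2$-graded sign bookkeeping, and this is where I expect the only genuine difficulty to lie: every transposition inside $\langle\cdot,\cdot\rangle$ and every invocation of torsion-freeness introduces a factor $(-1)^{[\cdot][\cdot]}$, and the pairwise cancellations go through only if these are tracked consistently, so that the residual sign on the $\langle Y,[X,Z]\rangle$ term comes out as the asserted $(-1)^{[X][Y]}$. As a cross-check I would verify that setting all parities to $\bar{0}$ recovers the classical left-invariant Koszul formula $2\langle\nabla_{X}Y,Z\rangle=\langle[X,Y],Z\rangle-\langle X,[Y,Z]\rangle-\langle Y,[X,Z]\rangle$. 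An alternative, and arguably cleaner, route is to take \eqref{Eq1} as the definition of a bilinear map $\nabla$, which is well posed since $\langle\cdot,\cdot\rangle$ is non-degenerate, and then to check directly that this $\nabla$ satisfies both defining conditions and invoke uniqueness of the Levi-Civita connection; this replaces the delicate three-term combination by two verifications, but relies on the same graded sign analysis.
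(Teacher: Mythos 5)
Your proposal is correct. Note that the paper does not actually prove Lemma \ref{3-lem-civita}: it is quoted from \cite{Goer08,ZGPR24}, so there is no in-paper argument to compare against. What you give is the standard $\mathbb{Z}_2$-graded Koszul derivation that those references contain: the three metric-compatibility relations (which, as you say, already appear in the paper in their algebraic form, so the left-invariance remark is not even needed), combined with the graded torsion-free identity $\nabla_{U}V=(-1)^{[U][V]}\nabla_{V}U+[U,V]$ and supersymmetry of $\langle\cdot,\cdot\rangle$. I checked the sign bookkeeping by chaining the three relations: with $a=[X]$, $b=[Y]$, $c=[Z]$ one arrives at $2\langle\nabla_{X}Y,Z\rangle=-(-1)^{ab+ac+bc}\langle Z,[Y,X]\rangle+(-1)^{bc}\langle X,[Z,Y]\rangle-(-1)^{ab}\langle Y,[X,Z]\rangle$, and normalizing the first two terms via supersymmetry and graded antisymmetry of the bracket indeed yields $\langle[X,Y],Z\rangle-\langle X,[Y,Z]\rangle-(-1)^{[X][Y]}\langle Y,[X,Z]\rangle$, so the residual sign comes out as claimed. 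Your alternative route (define $\nabla$ by \eqref{Eq1} using non-degeneracy of $\langle\cdot,\cdot\rangle$ and verify the two axioms plus uniqueness) is equally valid and arguably the cleaner way to write it up.
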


It follows from Lemma \ref{3-lem-civita} that $\nabla_{X}(\cdot) \in \mathrm{End}(\mathfrak{g})_{[X]}$.
 The Ricci tensor $\mathrm{ric}$ of $(\mathfrak{g},\langle\cdot,\cdot\rangle)$ is defined by
\begin{eqnarray*}
\mathrm{ric}(X,Y)=\mathrm{str}(Z\mapsto R(Z,X)Y),
\end{eqnarray*}
where $R$ is the curvature tensor:
\begin{eqnarray*}
R(X,Y)Z:=\nabla_{X}\nabla_{Y}Z-(-1)^{[X][Y]}\nabla_{Y}\nabla_{X}Z-\nabla_{[X,Y]}Z.
\end{eqnarray*}
\begin{rem}
Our definition of the Ricci tensor $\mathrm{ric}$ differs from that in \cite[formula (3.6)]{ZGPR24} by a sign.
\end{rem}
\begin{defn}
The metric $\langle\cdot,\cdot\rangle$ on $G$ is called Einstein if $\mathrm{ric}=c\langle\cdot,\cdot\rangle$ for some $c\in\mathbb{R}$. It is called Ricci flat if  $c=0$.
\end{defn}

The Ricci tensor $\mathrm{ric}$ is an even supersymmetric bilinear form on $\mathfrak{g}$. Hence, if $\mathfrak{g}$ is basic classical, then any bi-invariant metric is Einstein by Schur's lemma \cite{Kac}.

Let $\mathfrak{g}=\mathfrak{g}_{\bar{0}}+\mathfrak{g}_{\bar{1}}$ be a real Lie superalgebra with $\mathfrak{g}_{\bar{0}}$ reductive,  so that $\mathfrak{g}_{\bar{0}}=\mathfrak{k}_{0}\oplus\mathfrak{k}_{1}\oplus\cdots\oplus\mathfrak{k}_{s}$, where $\mathfrak{k}_{0}$ is Abelian and the $\mathfrak{k}_{i}$ ($1\leq i\leq s$) are simple ideals.
Suppose $\mathbf{B}$ is a bi-invariant metric on $\mathfrak{g}$. Then
 $\mathbf{B}(\mathfrak{k}_{i},\mathfrak{k}_{j})=0$ for $i\neq j$,  and $\mathbf{B}|_{\mathfrak{k}_{i}}$ is non-degenerate.
Moreover,  there exist $b_{1},\ldots,b_{s}\in\mathbb{R}$ such that
$$\mathbf{B}|_{\mathfrak{k}_{i}}=b_{i}\mathbf{K}_{i}, \, \forall 1\leq i \leq s,$$
 where $\mathbf{K}_{i}$  is the Killing form of $\mathfrak{k}_{i}$.
 If $\mathbf{B}=\mathbf{K}$, then $b_{i}=1-l_{i}$ for $1\leq i \leq s$, where $l_{i}$ is the index of the representation of $\mathfrak{k}_{i}$ on $\mathfrak{g}_{\bar{1}}$.
 Let $\mathcal{C}_{\mathbf{B}|_{\mathfrak{k}_{i}}}=\sum\limits_{j}(\mathrm{ad}\,e_{j}\circ\mathrm{ad}\,e_{j}^{\ast})|_{\mathfrak{g}_{\bar{1}}}$ be the Casimir operator of $\mathfrak{k}_{i}$ on $\mathfrak{g}_{\bar{1}}$ with respect to $\mathbf{B}|_{\mathfrak{k}_{i}}$, where $\{e_{j}\}\subset \mathfrak{k}_{i}$ is a basis and $\{e_{j}^{\ast}\}$ is the dual basis, i.e., $\mathbf{B}(e_{j},e_{k}^{\ast})=\delta_{jk}$.
Then  $\mathcal{C}_{\mathbf{B}|_{\mathfrak{k}_{i}}}$ is symmetric with respect to $\mathbf{B}$, namely,
$\mathbf{B}(\mathcal{C}_{\mathbf{B}|_{\mathfrak{k}_{i}}}(X),Y)
=\mathbf{B}(X,\mathcal{C}_{\mathbf{B}|_{\mathfrak{k}_{i}}}(Y))$ for all $X,Y\in \mathfrak{g}_{\bar{1}}$.
In particular, by Schur's lemma,  if $V\subset \mathfrak{g}_{\bar{1}}$ is a $\mathfrak{k}_{i}$-irreducible subspace,  then
$$\mathcal{C}_{\mathbf{B}|_{\mathfrak{k}_{i}}}|_{V}
=\frac{l_{i}\mathrm{dim}\,\mathfrak{k}_{i}}{b_{i}\mathrm{dim}\,V}\mathrm{Id}|_{V}.$$

Consider the  family of metrics on  $\mathfrak{g}$ of the form \eqref{Einstein-metric-form}:
\begin{equation*}
\langle\cdot,\cdot\rangle
 =x_{0}\mathbf{B}|_{\mathfrak{k}_{0}}+x_{1}\mathbf{B}|_{\mathfrak{k}_{1}}+\cdots+x_{s}\mathbf{B}|_{\mathfrak{k}_{s}}
 +\mathbf{B}|_{\mathfrak{g}_{\bar{1}}}, \quad x_{i}\in \mathbb{R}^{\ast}.
\end{equation*}
 We first show that $\langle\cdot,\cdot\rangle$ is naturally reductive
 in the sense of \cite[Section 3.5]{ZGPR24}.
 \begin{prop}
 $\langle\cdot,\cdot\rangle$ is naturally reductive with respect to $\mathfrak{g}\oplus\mathfrak{g}_{\bar{0}}$.
 \end{prop}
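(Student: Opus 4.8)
The plan is to realize $(\mathfrak g,\langle\cdot,\cdot\rangle)$ as the isotropy representation of a reductive homogeneous superspace attached to the direct sum superalgebra $\tilde{\mathfrak g}:=\mathfrak g\oplus\mathfrak g_{\bar 0}$, in the sense of \cite[Section 3.5]{ZGPR24}, exactly mirroring the classical D'Atri--Ziller construction \cite{dz79}. Writing elements of $\tilde{\mathfrak g}$ as pairs $(U,Z)$ with $U\in\mathfrak g$ and $Z\in\mathfrak g_{\bar 0}$, I take the isotropy subalgebra to be the diagonal $\mathfrak l:=\Delta\mathfrak g_{\bar 0}=\{(W,W):W\in\mathfrak g_{\bar 0}\}$, which is purely even, and let $\pi:\tilde{\mathfrak g}\to\mathfrak g$, $\pi(U,Z)=U-Z$, be the projection whose kernel is $\mathfrak l$. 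It then suffices to produce a reductive complement $\mathfrak m$ (so that $\tilde{\mathfrak g}=\mathfrak l\oplus\mathfrak m$ with $[\mathfrak l,\mathfrak m]\subseteq\mathfrak m$) together with an inner product on $\mathfrak m$ that is $\mathrm{ad}(\mathfrak l)$-invariant, satisfies the super naturally reductive identity, and is carried by $\pi|_{\mathfrak m}$ onto $\langle\cdot,\cdot\rangle$. My strategy is to obtain all three conditions simultaneously from a single $\mathrm{ad}(\tilde{\mathfrak g})$-invariant even supersymmetric bilinear form $Q$ on $\tilde{\mathfrak g}$ by setting $\mathfrak m:=\mathfrak l^{\perp_Q}$.

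The conceptual core is the general principle that a ``normal'' decomposition is automatically naturally reductive, which I would establish directly in the super setting. If $Q$ is $\mathrm{ad}(\tilde{\mathfrak g})$-invariant, even and supersymmetric, and $Q|_{\mathfrak l}$ is non-degenerate, then $\mathfrak m:=\mathfrak l^{\perp_Q}$ is a complement with $[\mathfrak l,\mathfrak m]\subseteq\mathfrak m$: for $W\in\mathfrak l$, $X\in\mathfrak m$, $V\in\mathfrak l$ one has $Q([W,X],V)=\pm Q(X,[W,V])=0$ since $[W,V]\in\mathfrak l$. Moreover, for homogeneous $X,Y,Z\in\mathfrak m$, writing $[\,\cdot,\cdot\,]_{\mathfrak m}$ for the $\mathfrak m$-component and using that the $\mathfrak l$-component is $Q$-orthogonal to $\mathfrak m$, bi-invariance and supersymmetry of $Q$ give $Q([X,Y]_{\mathfrak m},Z)=Q([X,Y],Z)=-(-1)^{[X][Y]}Q(Y,[X,Z])=-(-1)^{[X][Y]}Q(Y,[X,Z]_{\mathfrak m})$, which is precisely the naturally reductive identity for $Q|_{\mathfrak m}$. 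The parity bookkeeping is routine because both $\mathfrak l$ and $Q$ are even; this is the step I would write out carefully to match the sign conventions of Lemma \ref{3-lem-civita} and of \cite{ZGPR24}.

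It remains to construct $Q$ and verify the matching condition, which is the main computational obstacle. Since $\tilde{\mathfrak g}$ is a direct sum whose odd part is $\mathfrak g_{\bar 1}$, I take $Q=\mathbf B\oplus\tilde Q$, where $\mathbf B$ acts on the first summand and $\tilde Q=\sum_{i=1}^{s}\tilde q_i\mathbf K_i+\tilde Q|_{\mathfrak k_0}$ is an $\mathrm{ad}(\mathfrak g_{\bar 0})$-invariant form on the second summand (a scalar multiple of $\mathbf K_i$ on each simple ideal, and an arbitrary symmetric form on the abelian $\mathfrak k_0$); such $Q$ is automatically $\mathrm{ad}(\tilde{\mathfrak g})$-invariant because the two summands commute. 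The odd directions lie entirely in $\mathfrak m$ with $\pi$ acting as the identity, which forces the coefficient of $\mathbf B$ on $\mathfrak g$ to equal $1$ and reproduces the $\mathbf B|_{\mathfrak g_{\bar 1}}$ term of \eqref{Einstein-metric-form}. On each even block $\mathfrak k_i\oplus\mathfrak k_i$ a short computation yields $\mathfrak m\cap(\mathfrak k_i\oplus\mathfrak k_i)=\{(X,t_iX)\}$ with $t_i$ determined by $Q$, $\pi(X,t_iX)=(1-t_i)X$, and induced norm $(b_i+\tilde q_i t_i^2)/(1-t_i)^2$ relative to $\mathbf K_i$; using $\mathbf B|_{\mathfrak k_i}=b_i\mathbf K_i$ and solving the resulting scalar equation gives $\tilde q_i=x_i b_i/(1-x_i)$ (with the analogue on $\mathfrak k_0$), which recovers the factor $x_i\mathbf B|_{\mathfrak k_i}$ and is compatible with $Q|_{\mathfrak l}$ being non-degenerate. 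The one genuine subtlety is the degenerate value $x_i=1$, where this formula breaks down; there I would instead allow $Q$ to mix the two copies of $\mathfrak k_i$ through a cross term $\gamma_i\mathbf K_i$, supplying one extra parameter that realizes every $x_i\in\mathbb R^{\ast}$ while keeping $Q$ non-degenerate on $\mathfrak l$ and on each block. Assembling the blocks over $i=0,1,\ldots,s$ together with the odd part then produces $Q$, hence $\mathfrak m$ and the desired naturally reductive metric, completing the proof.
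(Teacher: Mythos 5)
Your route is genuinely different from the paper's: you try to realize $\langle\cdot,\cdot\rangle$ as a \emph{normal} homogeneous metric, taking $\mathfrak{m}=\mathfrak{l}^{\perp_Q}$ for a bi-invariant form $Q$ on $\mathfrak{g}\oplus\mathfrak{g}_{\bar 0}$ and invoking ``normal $\Rightarrow$ naturally reductive''. The paper instead fixes a parametrized complement $\mathfrak{m}_i(t_i)=\{(t_iX,(t_i-1)X)\}$, transports $\langle\cdot,\cdot\rangle$ to it, and checks the naturally reductive identity directly, finding that the defect is $\sum_i(x_i-t_i)(\cdots)$ and vanishes for $t_i=x_i$. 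For $x_i\neq 1$ your computation is consistent with this (your block $\{(X,s_iX)\}$ with $s_i=(x_i-1)/x_i$ is exactly $\mathfrak{m}_i(x_i)$ rescaled), and your super version of the ``normal implies naturally reductive'' lemma is fine.

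The genuine gap is the case $x_i=1$ on a simple ideal $\mathfrak{k}_i$, and your proposed patch does not work. A cross term $\gamma_i\mathbf{K}_i$ pairing the copy of $\mathfrak{k}_i$ inside the first summand $\mathfrak{g}$ with the copy inside the second summand $\mathfrak{g}_{\bar 0}$ is never $\mathrm{ad}(\mathfrak{g}\oplus\mathfrak{g}_{\bar 0})$-invariant: invariance under $\mathrm{ad}(U,0)$ with $U\in\mathfrak{k}_i\subset\mathfrak{g}$ forces $Q(([U,U'],0),(0,Z))=0$ for all $U,U',Z\in\mathfrak{k}_i$, and since $[\mathfrak{k}_i,\mathfrak{k}_i]=\mathfrak{k}_i$ the cross term must vanish identically (mixing is admissible only on the abelian $\mathfrak{k}_0$). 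Hence on each simple block $Q$ is forced to be $b_i\mathbf{K}_i\oplus\tilde q_i\mathbf{K}_i$, the $Q$-orthogonal complement of the diagonal is $\{(X,-(b_i/\tilde q_i)X)\}$, and the induced coefficient $x_i=1/(1-t_i)$ with $t_i=-b_i/\tilde q_i\in\mathbb{R}^{\ast}\setminus\{1\}$ sweeps out exactly $\mathbb{R}^{\ast}\setminus\{1\}$: the value $x_i=1$ is structurally unreachable by a normal metric on this ambient algebra, not a removable degeneracy of your formula. This matters, since solutions with some $x_i=1$ (in particular the bi-invariant one) occur throughout Section \ref{sect4}. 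To close the gap you must handle $x_i=1$ separately --- e.g.\ take the complement $\{(X,0)\}$ on that block and verify the naturally reductive identity by hand, or argue that the defect for the complement $\mathfrak{m}_i(x_i)$ is polynomial in the $x_i$ and vanishes on the dense set where no $x_i$ equals $1$ --- which in either form amounts to (part of) the paper's direct computation.
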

 \begin{proof}
Let $\mathrm{diag}(\mathfrak{g}_{\bar{0}})=\{(X,X)|X\in\mathfrak{g}_{\bar{0}}\}$.
There exists a direct sum decomposition:
$$\mathfrak{g}\oplus\mathfrak{g}_{\bar{0}}
=\mathrm{diag}(\mathfrak{g}_{\bar{0}})+\mathfrak{m}
=\mathrm{diag}(\mathfrak{g}_{\bar{0}})
+\mathfrak{m}_{0}(t_{0})+\mathfrak{m}_{1}(t_{1})+\cdots+\mathfrak{m}_{s}(t_{s})+\mathfrak{m}_{\bar{1}},$$
where
$\mathfrak{m}_{i}(t_{i})=\{(t_{i}X,(t_{i}-1)X)|X\in\mathfrak{k}_{i}\}$,
 $t_{i}\in \mathbb{R}$ for $0\leq i\leq s$,
 $\mathfrak{m}_{\bar{1}}=\{(X,0)|X\in\mathfrak{g}_{\bar{1}}\}$.
Consider the linear isomorphism  $\mathfrak{m}\rightarrow\mathfrak{g}$ given by
\begin{eqnarray*}
\sum_{i=0}^{s}(t_{i}X_{i},(t_{i}-1)X_{i})+(X_{\bar{1}},0)\longleftrightarrow \sum_{i=0}^{s}X_{i}+X_{\bar{1}}, \quad X_{i}\in\mathfrak{k}_{i}, X_{\bar{1}}\in\mathfrak{g}_{\bar{1}}.
\end{eqnarray*}
The metric $\langle\cdot,\cdot\rangle$ on $\mathfrak{g}$ induces  an $\mathrm{ad}\,(\mathrm{diag}(\mathfrak{g}_{\bar{0}}))$-invariant metric $\langle\cdot,\cdot\rangle'$ on $\mathfrak{m}$ defined by
\begin{eqnarray*}
\left\{
\begin{aligned}
& \langle \mathfrak{m}_{i}(t_{i}),\mathfrak{m}_{j}(t_{j}) \rangle'=0, \  i\neq j,\\
& \langle \mathfrak{m}_{i}(t_{i}),\mathfrak{m}_{\bar{1}} \rangle'=0,\\
& \langle (t_{i}X_{i},(t_{i}-1)X_{i}), (t_{i}Y_{i},(t_{i}-1)Y_{i})\rangle'=\langle X_{i},Y_{i}\rangle=x_{i}\mathbf{B}(X_{i},Y_{i}),
                                                                               \ X_{i},Y_{i}\in\mathfrak{k}_{i}, 0\leq i\leq s,\\
& \langle (X,0),(Y,0)\rangle'=\langle X,Y\rangle=\mathbf{B}(X,Y), \ X,Y\in\mathfrak{g}_{\bar{1}}.
\end{aligned}\right.
\end{eqnarray*}

We now choose $t_{0},t_{1},\ldots,t_{s}$ such that  $\langle\cdot,\cdot\rangle'$ on $\mathfrak{m}$ is naturally reductive, i.e., $\langle [X,Y]_{\mathfrak{m}},Z\rangle'=\langle X,[Y,Z]_{\mathfrak{m}}\rangle'$ holds for all $X,Y,Z\in\mathfrak{m}$. Let
$X=\sum_{i=0}^{s}(t_{i}X_{i},(t_{i}-1)X_{i})+(X_{\bar{1}},0)$,
$Y=\sum_{i=0}^{s}(t_{i}Y_{i},(t_{i}-1)Y_{i})+(Y_{\bar{1}},0)$
and $Z=\sum_{i=0}^{s}(t_{i}Z_{i},(t_{i}-1)Z_{i})+(Z_{\bar{1}},0)$,
where $X_{i},Y_{i},Z_{i}\in\mathfrak{k}_{i}$, $X_{\bar{1}},Y_{\bar{1}},Z_{\bar{1}}\in\mathfrak{g}_{\bar{1}}$.
A direct computation yields
\begin{eqnarray*}
[X,Y]&=& \sum_{i=0}^{s}\left( t_{i}^{2}[X_{i},Y_{i}],(t_{i}-1)^{2}[X_{i},Y_{i}] \right)
         +\left( [X_{\bar{1}},Y_{\bar{1}}],0\right) \\
      && +\left(
         \left[\sum_{i=0}^{s}t_{i}X_{i},Y_{\bar{1}}\right] +\left[ X_{\bar{1}},\sum_{i=0}^{s}t_{i}Y_{i}\right], 0
         \right).
\end{eqnarray*}
Hence
\begin{eqnarray*}
[X,Y]_{\mathfrak{m}}&=& \sum_{i=0}^{s}(2t_{i}-1)\left( t_{i}[X_{i},Y_{i}],(t_{i}-1)[X_{i},Y_{i}]\right)\\
                     && +\sum_{i=0}^{s} \left( t_{i}[X_{\bar{1}},Y_{\bar{1}}]_{\mathfrak{k}_{i}},(t_{i}-1)[X_{\bar{1}},Y_{\bar{1}}]_{\mathfrak{k}_{i}}\right)\\
                     && +\left(
                         \left[ \sum_{i=0}^{s}t_{i}X_{i},Y_{\bar{1}}\right] +\left[ X_{\bar{1}},\sum_{i=0}^{s}t_{i}Y_{i}\right],0
                         \right),
\end{eqnarray*}
and therefore
\begin{eqnarray*}
\langle [X,Y]_{\mathfrak{m}},Z\rangle'&=&\sum_{i=0}^{s} x_{i}\mathbf{B}\left(
                                          (2t_{i}-1)[X_{i},Y_{i}]+[X_{\bar{1}},Y_{\bar{1}}]_{\mathfrak{k}_{i}},Z_{i}
                                          \right) \\
                                      && +\mathbf{B}\left(
                                           \left[ \sum_{i=0}^{s}t_{i}X_{i},Y_{\bar{1}}\right]+\left[ X_{\bar{1}},\sum_{i=0}^{s}t_{i}Y_{i}\right] ,Z_{\bar{1}}
                                          \right)\\
                                       &=&\sum_{i=0}^{s}\Big(
                                              x_{i}(2t_{i}-1)\mathbf{B}([X_{i},Y_{i}],Z_{i})+x_{i}\mathbf{B}([X_{\bar{1}},Y_{\bar{1}}],Z_{i}) \\
                                       &&+t_{i}\mathbf{B}([X_{i},Y_{\bar{1}}],Z_{\bar{1}})+t_{i}\mathbf{B}([X_{\bar{1}},Y_{i}],Z_{\bar{1}})
                                            \Big).
\end{eqnarray*}
Similarly, we obtain
\begin{eqnarray*}
\langle X, [Y,Z]_{\mathfrak{m}}\rangle'&=&\sum_{i=0}^{s}\Big(
                                          x_{i}(2t_{i}-1)\mathbf{B}(X_{i},[Y_{i},Z_{i}])+x_{i}\mathbf{B}(X_{i},[Y_{\bar{1}},Z_{\bar{1}}])\\
                                       && +t_{i}\mathbf{B}(X_{\bar{1}},[Y_{i},Z_{\bar{1}}])+t_{i}\mathbf{B}(X_{\bar{1}},[Y_{\bar{1}},Z_{i}])
                                          \Big).
\end{eqnarray*}
Thus
\begin{eqnarray*}
\langle [X,Y]_{\mathfrak{m}},Z\rangle'-\langle X, [Y,Z]_{\mathfrak{m}}\rangle'=
\sum_{i=0}^{s}(x_{i}-t_{i})\Big(
                                    \mathbf{B}([X_{\bar{1}},Y_{\bar{1}}],Z_{i})- \mathbf{B}([X_{i},Y_{\bar{1}}],Z_{\bar{1}})
                               \Big).
\end{eqnarray*}
Setting $t_{i}=x_{i}$ for $0\leq i\leq s$ yields a naturally reductive metric $\langle\cdot,\cdot\rangle'$ on $\mathfrak{m}$. This completes the proof of the proposition.
 \end{proof}

We now compute the Ricci tensor of $(\mathfrak{g},\langle\cdot,\cdot\rangle)$.

\begin{lem}\label{3-lem-levi-civita}
Let $\nabla$ be the Levi-Civita connection of $(\mathfrak{g},\langle\cdot,\cdot\rangle)$, then
\begin{eqnarray*}
\nabla_{X}Y=
\begin{cases}
\frac{1}{2}[X,Y],            &X\in\mathfrak{k}_{i},Y\in\mathfrak{k}_{j};\\
\frac{1}{2}[X,Y],            &X,Y\in\mathfrak{g}_{\bar{1}};\\
\left(1-\frac{x_{i}}{2}\right)[X,Y],    &X\in\mathfrak{k}_{i},Y\in\mathfrak{g}_{\bar{1}};\\
\frac{x_{i}}{2}[X,Y],        &X\in\mathfrak{g}_{\bar{1}},Y\in\mathfrak{k}_{i}.\\
\end{cases}
\end{eqnarray*}
\end{lem}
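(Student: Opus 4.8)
The plan is to apply the super Koszul formula of Lemma \ref{3-lem-civita} and read off $\nabla_X Y$ from its pairings. Since $\langle\cdot,\cdot\rangle$ is non-degenerate, the element $\nabla_X Y$ is uniquely determined by the values $\langle\nabla_X Y,Z\rangle$ as $Z$ runs over homogeneous elements of the summands $\mathfrak{k}_0,\dots,\mathfrak{k}_s,\mathfrak{g}_{\bar 1}$; I will therefore fix homogeneous $X,Y$ in each of the four combinations of summands and evaluate the right-hand side of \eqref{Eq1} against each possible $Z$. Throughout I will use the standing structural facts: $\mathbf{B}$ is even, supersymmetric and bi-invariant; $\langle\cdot,\cdot\rangle$ coincides with $x_i\mathbf{B}$ on $\mathfrak{k}_i$ and with $\mathbf{B}$ on $\mathfrak{g}_{\bar 1}$; the ideals $\mathfrak{k}_i$ are mutually $\mathbf{B}$-orthogonal with $[\mathfrak{k}_i,\mathfrak{k}_j]=0$ for $i\neq j$; and the bracket respects parity, so $[\mathfrak{g}_{\bar 0},\mathfrak{g}_{\bar 1}]\subseteq\mathfrak{g}_{\bar 1}$ and $[\mathfrak{g}_{\bar 1},\mathfrak{g}_{\bar 1}]\subseteq\mathfrak{g}_{\bar 0}$. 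Evenness of the metric immediately kills any pairing between an even and an odd element, which eliminates most of the terms in each case.

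For the two homogeneous cases $X\in\mathfrak{k}_i,\,Y\in\mathfrak{k}_j$ and $X,Y\in\mathfrak{g}_{\bar 1}$, I claim the two correction terms $-\langle X,[Y,Z]\rangle$ and $-(-1)^{[X][Y]}\langle Y,[X,Z]\rangle$ in \eqref{Eq1} cancel one another for every $Z$. When both corrections land in the same summand as $\langle[X,Y],Z\rangle$, this is a direct consequence of combining bi-invariance with the (anti)symmetry of the bracket and of $\mathbf{B}$; when they land in a different-parity summand they vanish by evenness. Hence $2\langle\nabla_X Y,Z\rangle=\langle[X,Y],Z\rangle$ for all $Z$, giving $\nabla_X Y=\tfrac12[X,Y]$ in both cases. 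Here the scaling factors $x_i$ play no role, because the two surviving terms are weighted identically.

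The genuinely new phenomenon appears in the mixed case $X\in\mathfrak{k}_i$, $Y\in\mathfrak{g}_{\bar 1}$. Now $[X,Y]\in\mathfrak{g}_{\bar 1}$, so only $Z\in\mathfrak{g}_{\bar 1}$ contributes. The point is that the three surviving terms are scaled differently: the term $\langle X,[Y,Z]\rangle$ involves the even slot $X\in\mathfrak{k}_i$ paired against $[Y,Z]\in\mathfrak{g}_{\bar 0}$ and is therefore weighted by $x_i$, whereas $\langle[X,Y],Z\rangle$ and $\langle Y,[X,Z]\rangle$ pair odd vectors and carry weight $1$. Rewriting all three through $\mathbf{B}$ and reducing them to the single quantity $\mathbf{B}([X,Y],Z)$ via bi-invariance and supersymmetry, the identity \eqref{Eq1} becomes $2\langle\nabla_X Y,Z\rangle=(2-x_i)\,\mathbf{B}([X,Y],Z)=(2-x_i)\langle[X,Y],Z\rangle$, which yields $\nabla_X Y=(1-\tfrac{x_i}{2})[X,Y]$. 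The last case $X\in\mathfrak{g}_{\bar 1}$, $Y\in\mathfrak{k}_i$ requires no further computation: applying the torsion-free identity $\nabla_X Y-(-1)^{[X][Y]}\nabla_Y X=[X,Y]$ together with the value just obtained for $\nabla_Y X$ gives $\nabla_X Y=\tfrac{x_i}{2}[X,Y]$.

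I expect the only real difficulty to be bookkeeping: keeping the Koszul sign $(-1)^{[X][Y]}$, the supersymmetry sign of $\mathbf{B}$, and the bracket-parity signs mutually consistent while simultaneously tracking which terms acquire the factor $x_i$. The conceptual heart is the observation in the mixed case that the discrepancy between the weight $x_i$ on $\mathfrak{k}_i$ and the weight $1$ on $\mathfrak{g}_{\bar 1}$ is exactly what deforms the naive coefficient $\tfrac12$ into $1-\tfrac{x_i}{2}$ and $\tfrac{x_i}{2}$; once the sign conventions are fixed, each case is a short reduction to $\mathbf{B}([X,Y],Z)$.
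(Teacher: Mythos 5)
Your proposal is correct and follows essentially the same route as the paper: evaluate the Koszul formula \eqref{Eq1} against homogeneous $Z$ in each summand, use evenness and bi-invariance of $\mathbf{B}$ to reduce every surviving term to $\mathbf{B}([X,Y],Z)$ (the correction terms cancelling in the two pure cases and contributing the factor $2-x_i$ in the mixed case), and obtain the last case from torsion-freeness. All the coefficients and sign bookkeeping you describe match the paper's computation.
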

\begin{proof}
(\romannumeral1) For  $X\in\mathfrak{k}_{i}$, $Y\in\mathfrak{k}_{j}$, $Z\in\mathfrak{g}$, equation \eqref{Eq1} gives
\begin{eqnarray*}
2\langle\nabla_{X}Y,Z\rangle&=&\langle[X,Y],Z\rangle-x_{i}\mathbf{B}(X,[Y,Z])-x_{j}\mathbf{B}(Y,[X,Z])\\
                            &=&\langle[X,Y],Z\rangle+(x_{j}-x_{i})\mathbf{B}([X,Y],Z)\\
                            &=&\langle[X,Y],Z\rangle.
\end{eqnarray*}
Hence $\nabla_{X}Y=\frac{1}{2}[X,Y]$.

(\romannumeral2) For  $X,Y\in\mathfrak{g}_{\bar{1}}$, $Z\in\mathfrak{k}_{i}$, equation \eqref{Eq1} becomes
\begin{eqnarray*}
2x_{i}\mathbf{B}(\nabla_{X}Y,Z)&=&x_{i}\mathbf{B}([X,Y],Z)-\mathbf{B}(X,[Y,Z])+\mathbf{B}(Y,[X,Z])\\
                               &=&x_{i}\mathbf{B}([X,Y],Z),
\end{eqnarray*}
so $\nabla_{X}Y=\frac{1}{2}[X,Y]$.

(\romannumeral3) For  $X\in\mathfrak{k}_{i}$, $Y\in\mathfrak{g}_{\bar{1}}$, $Z\in\mathfrak{g}_{\bar{1}}$, equation \eqref{Eq1} becomes
\begin{eqnarray*}
2\mathbf{B}(\nabla_{X}Y,Z)&=&\mathbf{B}([X,Y],Z)-x_{i}\mathbf{B}(X,[Y,Z])-\mathbf{B}(Y,[X,Z])\\
                               &=&(2-x_{i})\mathbf{B}([X,Y],Z),
\end{eqnarray*}
so $\nabla_{X}Y=\left(1-\frac{x_{i}}{2}\right)[X,Y]$.

(\romannumeral4) For  $X\in\mathfrak{g}_{\bar{1}}$, $Y\in\mathfrak{k}_{i}$,
\begin{eqnarray*}
\nabla_{X}Y&=&\nabla_{Y}X+[X,Y]=\left(1-\frac{x_{i}}{2}\right)[Y,X]+[X,Y]\\
           &=&\frac{x_{i}}{2}[X,Y].
\end{eqnarray*}
\end{proof}

To compute the Ricci tensor, we also need
\begin{lem}\label{3-lem-tradX}
 For all $X,Y\in\mathfrak{g}_{\bar{1}}$ and $0\leq i\leq s$, we have
\begin{eqnarray*}
\left\{
\begin{aligned}
&\mathrm{tr}\,(\mathrm{ad}\,([X,Y]|_{\mathfrak{k}_{i}})|_{\mathfrak{g}_{\bar{1}}})=0,\\
&\mathrm{tr}\,(\mathrm{ad}\,X\circ\mathrm{ad}\,Y|_{\mathfrak{k}_{i}})
=\mathbf{B}\left(X,\mathcal{C}_{\mathbf{B}|_{\mathfrak{k}_{i}}}(Y)\right),\\
&\mathrm{tr}\left(Z\mapsto [X,[Y,Z]|_{\mathfrak{k}_{i}}],Z\in\mathfrak{g}_{\bar{1}}\right)
=-\mathbf{B}\left(X,\mathcal{C}_{\mathbf{B}|_{\mathfrak{k}_{i}}}(Y)\right).
\end{aligned}\right.
\end{eqnarray*}
\end{lem}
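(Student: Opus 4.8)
The plan is to reduce each of the three identities to a trace computed through $\mathbf{B}$-dual bases, and then to simplify using only the bi-invariance and supersymmetry of $\mathbf{B}$ together with the super Jacobi identity. Two elementary facts are used throughout. Since $\mathbf{B}$ is even and nondegenerate it is block diagonal, with $\mathbf{B}(\mathfrak{k}_i,\mathfrak{k}_j)=0$ for $i\neq j$ and $\mathbf{B}(\mathfrak{g}_{\bar 0},\mathfrak{g}_{\bar 1})=0$; hence for a basis $\{u_k\}$ of $\mathfrak{k}_i$ (or of $\mathfrak{g}_{\bar 1}$) with $\mathbf{B}$-dual basis $\{u_k^{*}\}$, the restricted trace of an operator $T$ equals $\sum_k\mathbf{B}(Tu_k,u_k^{*})$, any component of $Tu_k$ lying outside the subspace being automatically discarded; and any vector of that subspace expands as $V=\sum_k\mathbf{B}(V,u_k^{*})u_k$. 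I will treat $0\le i\le s$ uniformly, so that the abelian factor $\mathfrak{k}_0$ is covered together with the simple ideals.

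For the first identity I use that $\mathrm{ad}\,H$ is skew with respect to $\mathbf{B}$ for every even $H$: bi-invariance and supersymmetry give $\mathbf{B}([H,P],Q)+\mathbf{B}(P,[H,Q])=0$ for all $P,Q\in\mathfrak{g}_{\bar 1}$. As $\mathbf{B}|_{\mathfrak{g}_{\bar 1}}$ is a nondegenerate skew form, $\mathrm{ad}\,H|_{\mathfrak{g}_{\bar 1}}$ lies in the symplectic Lie algebra $\mathfrak{sp}(\mathfrak{g}_{\bar 1})$ and is therefore traceless. Taking $H=[X,Y]|_{\mathfrak{k}_i}$ gives the claim with no case distinction between the abelian and the simple factors.

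For the second identity I write the restricted trace over $\mathfrak{k}_i$ as $\sum_j\mathbf{B}([X,[Y,e_j]],e_j^{*})$ for a $\mathbf{B}$-dual basis $\{e_j\},\{e_j^{*}\}$ of $\mathfrak{k}_i$, move $X$ across by bi-invariance to get $\sum_j\mathbf{B}(X,[[Y,e_j],e_j^{*}])$, and then, using graded antisymmetry and super Jacobi (with $Y$ odd and $e_j,e_j^{*}$ even), rewrite $[[Y,e_j],e_j^{*}]$ as $[e_j^{*},[e_j,Y]]$; since the Casimir operator is independent of the chosen dual pair (valid because $\mathbf{B}|_{\mathfrak{k}_i}$ is symmetric, so $\{e_j^{*}\}$ is dual to $\{e_j\}$) this sum equals $\mathbf{B}(X,\mathcal{C}_{\mathbf{B}|_{\mathfrak{k}_i}}(Y))$. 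For the third identity I expand the restricted trace over $\mathfrak{g}_{\bar 1}$ with a dual basis $\{f_\alpha\},\{f_\alpha^{*}\}$, insert the projection $[Y,f_\alpha]|_{\mathfrak{k}_i}=\sum_j\mathbf{B}([Y,f_\alpha],e_j^{*})e_j$, and collapse the $\alpha$-sum by completeness on $\mathfrak{g}_{\bar 1}$, obtaining $\sum_j\mathbf{B}([Y,[X,e_j]],e_j^{*})$, the second identity's sum with $X$ and $Y$ interchanged. Adding the two and applying super Jacobi to the odd pair $X,Y$ gives $\sum_j\mathbf{B}([[Y,X],e_j],e_j^{*})$; since $[Y,X]=[X,Y]$ for odd $X,Y$, this is the self-adjoint trace $\mathrm{tr}(\mathrm{ad}([X,Y]|_{\mathfrak{k}_i})|_{\mathfrak{k}_i})$, which vanishes by simplicity of $\mathfrak{k}_i$ (trivially for $\mathfrak{k}_0$). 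Hence the third sum equals the negative of the second.

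I expect the main difficulty to be consistent sign bookkeeping. Supersymmetry makes $\mathbf{B}|_{\mathfrak{g}_{\bar 1}}$ skew rather than symmetric; the super Jacobi identity contributes a factor $(-1)^{[X][Y]}$ that equals $-1$ precisely because both $X$ and $Y$ are odd, and this is exactly what flips the sign between the second and third identities; and graded antisymmetry must be applied with care whenever an odd element is moved past an even one. Equally delicate is interpreting each restricted trace as the trace of a map whose image may leave the subspace and must be projected back before summing the diagonal. Once the parities and projections are handled correctly, the remaining computations are routine.
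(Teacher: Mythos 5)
Your proof is correct, and for two of the three identities it takes a genuinely different route from the paper's. For the first identity the paper argues by cases: for $i\geq 1$ it uses that the character $H\mapsto\mathrm{tr}(\mathrm{ad}\,H|_{\mathfrak{g}_{\bar 1}})$ vanishes on the perfect ideal $\mathfrak{k}_{i}$, and for $i=0$ it uses $\mathrm{tr}\,\mathrm{ad}([X,Y])=-\mathrm{str}\,[\mathrm{ad}\,X,\mathrm{ad}\,Y]=0$ together with the reductivity of $\mathfrak{g}_{\bar 0}$; your observation that $\mathrm{ad}\,H|_{\mathfrak{g}_{\bar 1}}$ preserves the nondegenerate skew form $\mathbf{B}|_{\mathfrak{g}_{\bar 1}}$, hence lies in a symplectic Lie algebra and is traceless, covers both cases uniformly, at the harmless cost of invoking the nondegeneracy of $\mathbf{B}$, which the paper's version of this particular identity does not need. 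The second identity is proved exactly as in the paper. For the third, the paper collapses the dual-basis sum to $\sum_{k}\mathbf{B}([X,e_{k}],[Y,e_{k}^{\ast}])$ and moves brackets across $\mathbf{B}$ once more to reach $-\mathbf{B}(X,\mathcal{C}_{\mathbf{B}|_{\mathfrak{k}_{i}}}(Y))$ directly; you instead identify the collapsed sum as $\mathrm{tr}(\mathrm{ad}\,Y\circ\mathrm{ad}\,X|_{\mathfrak{k}_{i}})$ (the second identity with $X$ and $Y$ interchanged, as the cyclicity of the trace confirms) and then show that the symmetrized sum equals $\mathrm{tr}(\mathrm{ad}([X,Y]|_{\mathfrak{k}_{i}})|_{\mathfrak{k}_{i}})=0$ via the super Jacobi identity. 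Both computations agree; your version makes the sign flip between the second and third identities conceptually transparent (it is precisely the failure of $\mathrm{ad}\,X$ and $\mathrm{ad}\,Y$ to supercommute on $\mathfrak{k}_{i}$), while the paper's is a shorter chain of equalities. The sign checks you flag ($[Y,X]=[X,Y]$ for odd $X,Y$, the skewness of $\mathbf{B}|_{\mathfrak{g}_{\bar 1}}$, and the independence of the Casimir operator from the choice of dual pair) are exactly the right ones and all come out correctly.
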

\begin{proof}
(i) For $i\neq 0$,  since $\mathfrak{k}_{i}$  is simple,
the first identity holds.
For $i=0$, we have
\begin{equation*}
\mathrm{tr}\,(\mathrm{ad}\,([X,Y]|_{\mathfrak{k}_{0}})|_{\mathfrak{g}_{\bar{1}}})
=\mathrm{tr}\,(\mathrm{ad}\,([X,Y])|_{\mathfrak{g}_{\bar{1}}})
= \mathrm{tr}\,\mathrm{ad}\,([X,Y])
=-\mathrm{str}\,[\mathrm{ad}\,X,\mathrm{ad}\,Y]
=0.
\end{equation*}

(ii) Let $\{e_{j}\}\subset\mathfrak{k}_{i}$ be an arbitrary basis and $\{e_{j}^{\ast}\}\subset\mathfrak{k}_{i}$ its $\mathbf{B}$-dual basis, i.e., $\mathbf{B}(e_{j},e_{k}^{\ast})=\delta_{jk}$. Then
\begin{eqnarray*}
\mathrm{tr}\,(\mathrm{ad}\,X\circ\mathrm{ad}\,Y|_{\mathfrak{k}_{i}})
&=&\sum\limits_{j}\mathbf{B}([X,[Y,e_{j}]],e_{j}^{\ast})\\
&=&\sum\limits_{j}\mathbf{B}(X,[[Y,e_{j}],e_{j}^{\ast}])\\
&=&\sum\limits_{j}\mathbf{B}(X,[e_{j}^{\ast},[e_{j},Y]])\\
 &=&\mathbf{B}(X,\mathcal{C}_{\mathbf{B}|_{\mathfrak{k}_{i}}}(Y)).
\end{eqnarray*}

(iii) Let $\{f_{j}\}\subset\mathfrak{g}_{\bar{1}}$ be a  basis and $\{f_{j}^{\ast}\}\subset\mathfrak{g}_{\bar{1}}$ its $\mathbf{B}$-dual basis, i.e., $\mathbf{B}(f_{j},f_{k}^{\ast})=\delta_{jk}$. Then
\begin{eqnarray*}
\mathrm{tr}\left(Z\mapsto [X,[Y,Z]|_{\mathfrak{k}_{i}}],Z\in\mathfrak{g}_{\bar{1}}\right)
&=&\sum_{j}\mathbf{B}\left(\left[X,[Y,f_{j}]|_{\mathfrak{k}_{i}}\right],f_{j}^{\ast}\right)\\
&=&\sum_{j}\mathbf{B}\left(\left[X,\sum_{k}\mathbf{B}([Y,f_{j}],e_{k}^{\ast})e_{k}\right],f_{j}^{\ast}\right)\\
&=&\sum_{j,k}\mathbf{B}([Y,f_{j}],e_{k}^{\ast})\mathbf{B}([X,e_{k}],f_{j}^{\ast})\\
&=&\sum_{j,k}\mathbf{B}(f_{j},[Y,e_{k}^{\ast}])\mathbf{B}([X,e_{k}],f_{j}^{\ast})\\
&=&\sum_{k}\mathbf{B}([X,e_{k}],[Y,e_{k}^{\ast}])\\
&=&-\sum_{k}\mathbf{B}(X,[e_{k},[e_{k}^{\ast},Y]])\\
&=&-\mathbf{B}\left(X,\mathcal{C}_{\mathbf{B}|_{\mathfrak{k}_{i}}}(Y)\right).
\end{eqnarray*}
\end{proof}
\begin{cor}\label{3-cor-B=K}
With the above notation,
$$\mathbf{K}(X,Y)=2\sum_{i=0}^{s}\mathbf{B}\left(X,\mathcal{C}_{\mathbf{B}|_{\mathfrak{k}_{i}}}(Y)\right),
\quad \forall X,Y\in \mathfrak{g}_{\bar{1}}.
$$
\end{cor}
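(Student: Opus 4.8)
The plan is to expand the definition $\mathbf{K}(X,Y)=\mathrm{str}(\mathrm{ad}\,X\circ\mathrm{ad}\,Y)$ for $X,Y\in\mathfrak{g}_{\bar 1}$ and reduce each piece to the traces already computed in Lemma \ref{3-lem-tradX}. Since $X$ and $Y$ are both odd, the composition $\mathrm{ad}\,X\circ\mathrm{ad}\,Y$ is even, hence maps $\mathfrak{g}_{\bar 0}$ into $\mathfrak{g}_{\bar 0}$ and $\mathfrak{g}_{\bar 1}$ into $\mathfrak{g}_{\bar 1}$. By the definition of the supertrace,
\begin{equation*}
\mathbf{K}(X,Y)=\mathrm{tr}\big((\mathrm{ad}\,X\circ\mathrm{ad}\,Y)|_{\mathfrak{g}_{\bar 0}}\big)
-\mathrm{tr}\big((\mathrm{ad}\,X\circ\mathrm{ad}\,Y)|_{\mathfrak{g}_{\bar 1}}\big),
\end{equation*}
so it suffices to evaluate the two ordinary traces separately.

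For the even part I would use the $\mathbf{B}$-orthogonal decomposition $\mathfrak{g}_{\bar 0}=\mathfrak{k}_0\oplus\cdots\oplus\mathfrak{k}_s$. Because off-diagonal blocks of an endomorphism contribute nothing to its trace, the trace on $\mathfrak{g}_{\bar 0}$ equals the sum of the diagonal-block traces $\sum_{i=0}^{s}\mathrm{tr}(\mathrm{ad}\,X\circ\mathrm{ad}\,Y|_{\mathfrak{k}_i})$, each block trace being computed by pairing against a $\mathbf{B}$-dual basis of $\mathfrak{k}_i$, where the orthogonality $\mathbf{B}(\mathfrak{k}_i,\mathfrak{k}_j)=0$ for $i\neq j$ guarantees that the pairing sees only the $\mathfrak{k}_i$-component. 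Lemma \ref{3-lem-tradX}(ii) then identifies each such block trace with $\mathbf{B}(X,\mathcal{C}_{\mathbf{B}|_{\mathfrak{k}_i}}(Y))$, yielding $\sum_{i=0}^{s}\mathbf{B}(X,\mathcal{C}_{\mathbf{B}|_{\mathfrak{k}_i}}(Y))$ for the even trace.

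For the odd part, the key observation is that for $Z\in\mathfrak{g}_{\bar 1}$ the bracket $[Y,Z]$ lies in $\mathfrak{g}_{\bar 0}$, so it decomposes as $[Y,Z]=\sum_{i=0}^{s}[Y,Z]|_{\mathfrak{k}_i}$ and hence $[X,[Y,Z]]=\sum_{i=0}^{s}[X,[Y,Z]|_{\mathfrak{k}_i}]$. Taking traces term by term and applying Lemma \ref{3-lem-tradX}(iii) gives $\mathrm{tr}((\mathrm{ad}\,X\circ\mathrm{ad}\,Y)|_{\mathfrak{g}_{\bar 1}})=-\sum_{i=0}^{s}\mathbf{B}(X,\mathcal{C}_{\mathbf{B}|_{\mathfrak{k}_i}}(Y))$. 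Substituting both traces into the supertrace formula, the minus sign in front of the odd trace turns the subtraction into an addition of two equal quantities, producing the factor of $2$ and the claimed identity. The only point requiring care is the bookkeeping in the even part, namely verifying that summing the diagonal-block traces recovers the full trace on $\mathfrak{g}_{\bar 0}$ (in particular, $\mathrm{ad}\,X\circ\mathrm{ad}\,Y$ need not preserve the individual $\mathfrak{k}_i$); but this is immediate once the $\mathbf{B}$-orthogonality of the ideals is invoked, and there is no genuine analytic obstacle beyond assembling the two halves of Lemma \ref{3-lem-tradX}.
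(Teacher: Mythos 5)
Your proposal is correct and follows essentially the same route as the paper: split the supertrace into the even and odd block traces, decompose each as a sum over the ideals $\mathfrak{k}_{i}$, and apply parts (ii) and (iii) of Lemma \ref{3-lem-tradX} to identify both contributions with $\sum_{i}\mathbf{B}(X,\mathcal{C}_{\mathbf{B}|_{\mathfrak{k}_{i}}}(Y))$, the sign in the supertrace producing the factor of $2$. Your extra remark on why the diagonal-block traces recover the full trace on $\mathfrak{g}_{\bar{0}}$ is a point the paper leaves implicit, but it is the same argument.
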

\begin{proof}
It follows from Lemma \ref{3-lem-tradX} that
\begin{eqnarray*}
% \nonumber to remove numbering (before each equation)
  \mathbf{K}(X,Y) &=& \mathrm{str}\,(\mathrm{ad}\,X\circ\mathrm{ad}\,Y) \\
   &=&  \mathrm{tr}\,(\mathrm{ad}\,X\circ\mathrm{ad}\,Y|_{\mathfrak{g}_{\bar{0}}})
   -\mathrm{tr}\,(\mathrm{ad}\,X\circ\mathrm{ad}\,Y|_{\mathfrak{g}_{\bar{1}}}) \\
   &=& \sum_{i=0}^{s}\Big(\mathrm{tr}\,(\mathrm{ad}\,X\circ\mathrm{ad}\,Y|_{\mathfrak{k}_{i}})
   -\mathrm{tr}\left(Z\mapsto [X,[Y,Z]|_{\mathfrak{k}_{i}}],Z\in\mathfrak{g}_{\bar{1}}\right)\Big) \\
   &=& 2\sum_{i=0}^{s}\mathbf{B}\left(X,\mathcal{C}_{\mathbf{B}|_{\mathfrak{k}_{i}}}(Y)\right).
\end{eqnarray*}
\end{proof}
\begin{prop}\label{3-prop-ric-tensor}
The Ricci tensor $\mathrm{ric}$ of $(\mathfrak{g},\langle\cdot,\cdot\rangle)$ is given by
\begin{eqnarray*}
\mathrm{ric}(X,Y)=
\begin{cases}
-\frac{x_{0}^{2}}{4}\mathbf{K}(X,Y),       &X,Y\in\mathfrak{k}_{0};\\
\frac{1}{4}(l_{i}x_{i}^{2}-1)\mathbf{K}_{i}(X,Y),                                               &X,Y\in\mathfrak{k}_{i}, 1\leq i\leq s;\\
\sum\limits_{i=0}^{s}\left(\frac{x_{i}}{2}-1\right)\mathbf{B}(X,\mathcal{C}_{\mathbf{B}|_{\mathfrak{k}_{i}}}(Y)),    &X,Y\in\mathfrak{g}_{\bar{1}};\\
0,                                                                                              &\mathrm{otherwise}.\\
\end{cases}
\end{eqnarray*}
\end{prop}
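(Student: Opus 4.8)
The plan is to evaluate the defining supertrace $\mathrm{ric}(X,Y)=\mathrm{str}(Z\mapsto R(Z,X)Y)$ directly in each case by inserting the Levi-Civita formulas of Lemma \ref{3-lem-levi-civita} into $R(Z,X)Y=\nabla_Z\nabla_X Y-(-1)^{[Z][X]}\nabla_X\nabla_Z Y-\nabla_{[Z,X]}Y$. On each diagonal block the operator $Z\mapsto R(Z,X)Y$ has parity $[X]+[Y]=0$, so it is block diagonal and the supertrace splits as $\mathrm{str}=\sum_{i=0}^{s}\mathrm{tr}_{\mathfrak{k}_i}-\mathrm{tr}_{\mathfrak{g}_{\bar 1}}$; I will compute the even ($Z\in\mathfrak{k}_i$) and odd ($Z\in\mathfrak{g}_{\bar 1}$) contributions separately and assemble them with this sign. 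Two of the four cases need no real work: since $\mathrm{ric}$ is even supersymmetric it vanishes on pairs of opposite parity, disposing of the mixed entries, while for $X\in\mathfrak{k}_i$, $Y\in\mathfrak{k}_j$ with $i\neq j$ one has $[X,Y]=0$ and every surviving term reduces to a value of the $\mathrm{ad}$-invariant trace form $\mathrm{tr}(\mathrm{ad}\,X\circ\mathrm{ad}\,Y|_{\mathfrak{g}_{\bar 1}})$, which vanishes between distinct ideals (and between $\mathfrak{k}_0$ and any $\mathfrak{k}_j$) because an invariant form satisfies $\mathbf{B}(\mathfrak{k}_i,\mathfrak{k}_j)=\mathbf{B}([\mathfrak{k}_i,\mathfrak{k}_i],\mathfrak{k}_j)=0$.

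For the Abelian block $X,Y\in\mathfrak{k}_0$ and the simple blocks $X,Y\in\mathfrak{k}_i$, the even part is supported only on the component containing $X,Y$ (all cross terms die by the ideal structure), and there the connection is simply $\tfrac12[\cdot,\cdot]$ independently of the scaling, so $R(Z,X)Y=\tfrac14[Z,[X,Y]]-\tfrac14[X,[Z,Y]]-\tfrac12[[Z,X],Y]$; taking $\mathrm{tr}_{\mathfrak{k}_i}$ and using that the trace of the $\mathrm{ad}$ of a bracket vanishes together with $\mathrm{tr}(\mathrm{ad}\,X\circ\mathrm{ad}\,Y|_{\mathfrak{k}_i})=\mathbf{K}_i(X,Y)$ yields $-\tfrac14\mathbf{K}_i(X,Y)$, which for $\mathfrak{k}_0$ degenerates to $0$ because $[X,Y]=0$. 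For the odd part $Z\in\mathfrak{g}_{\bar 1}$ the mixed formulas (iii), (iv) of Lemma \ref{3-lem-levi-civita} introduce the factors $1-\tfrac{x_i}{2}$ and $\tfrac{x_i}{2}$; here I invoke Lemma \ref{3-lem-tradX}(i) to kill the $[Z,[X,Y]]$ trace and the definition of the index, $\mathrm{tr}(\mathrm{ad}\,X\circ\mathrm{ad}\,Y|_{\mathfrak{g}_{\bar 1}})=l_i\mathbf{K}_i(X,Y)$, to reduce the remaining traces to $-\tfrac14 l_i x_i^2\mathbf{K}_i(X,Y)$ (for $\mathfrak{k}_0$, where no index is available, I instead use $\mathrm{tr}_{\mathfrak{g}_{\bar 1}}(\mathrm{ad}\,X\circ\mathrm{ad}\,Y)=-\mathbf{K}(X,Y)$, since the even part of the Killing form vanishes on the center). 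Combining with the sign $\mathrm{str}=\mathrm{tr}_{\bar 0}-\mathrm{tr}_{\bar 1}$ produces $-\tfrac{x_0^2}{4}\mathbf{K}$ and $\tfrac14(l_i x_i^2-1)\mathbf{K}_i$.

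The odd--odd case $X,Y\in\mathfrak{g}_{\bar 1}$ is where I expect the real difficulty, since now every $\mathfrak{k}_j$ contributes to the even trace and $\mathfrak{g}_{\bar 1}$ to the odd trace. Decomposing $[X,Y]=\sum_i[X,Y]_{\mathfrak{k}_i}$ and applying the connection formulas, I will express each $\mathrm{tr}_{\mathfrak{k}_j}(Z\mapsto R(Z,X)Y)$ and $\mathrm{tr}_{\mathfrak{g}_{\bar 1}}(Z\mapsto R(Z,X)Y)$ through parts (i)--(iii) of Lemma \ref{3-lem-tradX}, so that every surviving term becomes a multiple of $\mathbf{B}(X,\mathcal{C}_{\mathbf{B}|_{\mathfrak{k}_i}}(Y))$ with a coefficient assembled from $x_i$, with Corollary \ref{3-cor-B=K} accounting for the constant parts. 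The main obstacle is the bookkeeping of signs: one must use the Koszul sign $(-1)^{[Z][X]}=-1$ when $Z$ is odd, the symmetry $[Z,Y]=[Y,Z]$ of the odd--odd bracket, and the antisymmetry of $\mathbf{B}$ on $\mathfrak{g}_{\bar 1}$ (so that $\mathbf{B}(Y,\mathcal{C}_{\mathbf{B}|_{\mathfrak{k}_i}}(X))=-\mathbf{B}(X,\mathcal{C}_{\mathbf{B}|_{\mathfrak{k}_i}}(Y))$), and one must carefully separate the part of each double bracket that is projected into $\mathfrak{k}_i$ (as in Lemma \ref{3-lem-tradX}(iii)) from the part returning to $\mathfrak{g}_{\bar 1}$. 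Carrying this through, I expect the even trace to contribute $-\sum_i\tfrac{x_i}{4}\mathbf{B}(X,\mathcal{C}_{\mathbf{B}|_{\mathfrak{k}_i}}(Y))$ and the odd trace $\sum_i(1-\tfrac{3x_i}{4})\mathbf{B}(X,\mathcal{C}_{\mathbf{B}|_{\mathfrak{k}_i}}(Y))$, whose difference collapses to $\sum_{i=0}^{s}(\tfrac{x_i}{2}-1)\mathbf{B}(X,\mathcal{C}_{\mathbf{B}|_{\mathfrak{k}_i}}(Y))$, as claimed.
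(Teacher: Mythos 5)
Your proposal is correct and follows essentially the same route as the paper: insert the Levi-Civita formulas into the curvature, split the supertrace as $\sum_i\mathrm{tr}_{\mathfrak{k}_i}-\mathrm{tr}_{\mathfrak{g}_{\bar 1}}$, and reduce the odd--odd case via Lemma \ref{3-lem-tradX} together with the $\mathbf{B}$-symmetry of the Casimir and the antisymmetry of $\mathbf{B}$ on $\mathfrak{g}_{\bar 1}$. The intermediate values you predict (even trace $-\sum_i\tfrac{x_i}{4}\mathbf{B}(X,\mathcal{C}_{\mathbf{B}|_{\mathfrak{k}_i}}(Y))$, odd trace $\sum_i\bigl(1-\tfrac{3x_i}{4}\bigr)\mathbf{B}(X,\mathcal{C}_{\mathbf{B}|_{\mathfrak{k}_i}}(Y))$, and $-\tfrac14 l_i x_i^2\mathbf{K}_i$ for the odd contribution on a simple block) agree exactly with what the paper's computation yields, the only cosmetic difference being that you invoke the index directly where the paper writes $\mathrm{tr}(\mathrm{ad}\,X\circ\mathrm{ad}\,Y|_{\mathfrak{g}_{\bar 1}})=\mathbf{K}_{\bar 0}(X,Y)-\mathbf{K}(X,Y)$ and substitutes $\mathbf{K}|_{\mathfrak{k}_i}=(1-l_i)\mathbf{K}_i$ at the end.
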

\begin{proof}
(\romannumeral1) Let $X\in\mathfrak{k}_{i}$, $Y\in\mathfrak{k}_{j}$. Then for  $Z\in\mathfrak{g}_{\bar{0}}$,
\begin{eqnarray*}
R(Z,X)Y&=&\nabla_{Z}\nabla_{X}Y-\nabla_{X}\nabla_{Z}Y-\nabla_{[Z,X]}Y\\
       &=&\frac{1}{4}[Z,[X,Y]]-\frac{1}{4}[X,[Z,Y]]-\frac{1}{2}[[Z,X],Y]\\
       &=&-\frac{1}{4}[Y,[X,Z]].
\end{eqnarray*}
For $Z\in\mathfrak{g}_{\bar{1}}$,
\begin{eqnarray*}
R(Z,X)Y&=&\nabla_{Z}\nabla_{X}Y-\nabla_{X}\nabla_{Z}Y-\nabla_{[Z,X]}Y\\
       &=&\frac{1}{2}\nabla_{Z}[X,Y]-\frac{x_{j}}{2}\nabla_{X}[Z,Y]-\frac{x_{j}}{2}[[Z,X],Y]\\
       &=&\frac{x_{j}}{4}[Z,[X,Y]]-\frac{x_{j}}{2}\left(1-\frac{x_{i}}{2}\right)[X,[Z,Y]]-\frac{x_{j}}{2}[[Z,X],Y]\\
       &=&-\frac{x_{j}}{4}[Y,[X,Z]]+\frac{x_{j}(1-x_{i})}{4}[X,[Y,Z]].
\end{eqnarray*}
Then
\begin{eqnarray*}
\mathrm{ric}(X,Y)&=&\mathrm{str}(Z\mapsto R(Z,X)Y)\\
                 &=&-\frac{1}{4}\mathbf{K}_{\bar{0}}(Y,X)
                 +\frac{x_{j}}{4}\mathrm{tr}\,(\mathrm{ad}\,Y\,\mathrm{ad}\,X|_{\mathfrak{g}_{\bar{1}}})
                    -\frac{x_{j}(1-x_{i})}{4}\mathrm{tr}\,(\mathrm{ad}\,X\,\mathrm{ad}\,Y|_{\mathfrak{g}_{\bar{1}}})\\
                 &=&-\frac{1}{4}\mathbf{K}_{\bar{0}}(Y,X)+\frac{x_{j}}{4}(\mathbf{K}_{\bar{0}}(Y,X)-\mathbf{K}(Y,X)) -\frac{x_{j}(1-x_{i})}{4}(\mathbf{K}_{\bar{0}}(X,Y)-\mathbf{K}(X,Y))\\
                 &=&\frac{1}{4}(x_{i}x_{j}-1)\mathbf{K}_{\bar{0}}(X,Y)-\frac{x_{i}x_{j}}{4}\mathbf{K}(X,Y),
\end{eqnarray*}
where $\mathbf{K}_{\bar{0}}$ is the Killing form of $\mathfrak{g}_{\bar{0}}$.
This implies that
\begin{eqnarray*}
&&\mathrm{ric}(\mathfrak{k}_{i},\mathfrak{k}_{j})=0, i\neq j,\\
&&\mathrm{ric}(X,Y)=-\frac{x_{0}^{2}}{4}\mathbf{K}(X,Y), \, \forall X,Y\in\mathfrak{k}_{0}.
\end{eqnarray*}
Since $\mathbf{K}|_{\mathfrak{k}_{i}}=(1-l_{i})\mathbf{K}_{i}$, we get
\begin{eqnarray*}
 \mathrm{ric}(X,Y)&=&\frac{1}{4}(x_{i}^{2}-1)\mathbf{K}_{i}(X,Y)-\frac{x_{i}^{2}}{4}(1-l_{i})\mathbf{K}_{i}(X,Y)\\
                  &=&\frac{1}{4}(l_{i}x_{i}^{2}-1)\mathbf{K}_{i}(X,Y), \,\forall X,Y\in\mathfrak{k}_{i}.
\end{eqnarray*}

(\romannumeral2) Let $X,Y\in\mathfrak{g}_{\bar{1}}$. Then for  $Z\in \mathfrak{k}_{i}$,
\begin{eqnarray*}
R(Z,X)Y&=&\nabla_{Z}\nabla_{X}Y-\nabla_{X}\nabla_{Z}Y-\nabla_{[Z,X]}Y\\
       &=&\frac{1}{2}\nabla_{Z}[X,Y]-\left(1-\frac{x_{i}}{2}\right)\nabla_{X}[Z,Y]-\frac{1}{2}[[Z,X],Y]\\
       &=&\frac{1}{4}[Z,[X,Y]]-\frac{1}{2}\left(1-\frac{x_{i}}{2}\right)[X,[Z,Y]]-\frac{1}{2}[[Z,X],Y]\\
       &=&-\frac{1}{4}(1-x_{i})[X,[Z,Y]]-\frac{1}{4}[[Z,X],Y]\\
       &=&\frac{1}{4}(1-x_{i})[X,[Y,Z]]+\frac{1}{4}[Y,[X,Z]].
\end{eqnarray*}

For  $Z\in\mathfrak{g}_{\bar{1}}$,
\begin{eqnarray*}
R(Z,X)Y&=&\nabla_{Z}\nabla_{X}Y+\nabla_{X}\nabla_{Z}Y-\nabla_{[Z,X]}Y\\
       &=&\frac{1}{2}\nabla_{Z}[X,Y]+\frac{1}{2}\nabla_{X}[Z,Y]-\nabla_{[Z,X]}Y\\
       &=&\sum_{i=0}^{s}\left(\frac{x_{i}}{4}[Z,[X,Y]|_{\mathfrak{k}_{i}}]+\frac{x_{i}}{4}[X,[Z,Y]|_{\mathfrak{k}_{i}}]-\left(1-\frac{x_{i}}{2}\right)[[Z,X]|_{\mathfrak{k}_{i}},Y]\right)\\
       &=&\sum_{i=0}^{s}\left(-\frac{x_{i}}{4}[[X,Y]|_{\mathfrak{k}_{i}},Z]+\frac{x_{i}}{4}[X,[Y,Z]|_{\mathfrak{k}_{i}}]+\left(1-\frac{x_{i}}{2}\right)[Y,[X,Z]|_{\mathfrak{k}_{i}}]\right).
\end{eqnarray*}
By Lemma \ref{3-lem-tradX}, we obtain
\begin{eqnarray*}
\mathrm{ric}(X,Y)&=&\mathrm{str}(Z\mapsto R(Z,X)Y)\\
                 &=&\sum\limits_{i=0}^{s}\frac{1}{4}(1-x_{i})\mathrm{tr}\,(\mathrm{ad}\,X\circ\mathrm{ad}\,Y|_{\mathfrak{k}_{i}})
                    +\frac{1}{4}\mathrm{tr}\,(\mathrm{ad}\,Y\circ\mathrm{ad}\,X|_{\mathfrak{g}_{\bar{0}}})\\
                 &&-\sum_{i=0}^{s}\left(\left(-\frac{x_{i}}{4}\right)\mathbf{B}\left(X,\mathcal{C}_{\mathbf{B}|_{\mathfrak{k}_{i}}}(Y)\right)
                    -\left(1-\frac{x_{i}}{2}\right)\mathbf{B}\left(Y,\mathcal{C}_{\mathbf{B}|_{\mathfrak{k}_{i}}}(X)\right) \right)\\
                 &=&\sum_{i=0}^{s}\left(\frac{x_{i}}{2}-1\right)\mathbf{B}\left(X,\mathcal{C}_{\mathbf{B}|_{\mathfrak{k}_{i}}}(Y)\right).
\end{eqnarray*}
This completes the proof of the proposition.
\end{proof}
\begin{thm}\label{3-thm-main}
$(\mathfrak{g},\langle\cdot,\cdot\rangle)$ is Einstein with $\mathrm{ric}=c\langle\cdot,\cdot\rangle$ if and only if the following equations hold:
\begin{eqnarray}\label{Einstein equiivalent}
\left\{
\begin{aligned}
&-\frac{x_{0}}{4}\mathbf{K}|_{\mathfrak{k}_{0}}=c\mathbf{B}|_{\mathfrak{k}_{0}},  \\
&\frac{1}{4}(l_{i}x_{i}^{2}-1)=c x_{i}b_{i},            \quad 1\leq i\leq s,\\
&\sum\limits_{i=0}^{s}\left(\frac{x_{i}}{2}-1\right)\mathcal{C}_{\mathbf{B}|_{\mathfrak{k}_{i}}}=c\, \mathrm{Id}|_{\mathfrak{g}_{\bar{1}}}.
\end{aligned}\right.
\end{eqnarray}
\end{thm}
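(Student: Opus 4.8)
The plan is to establish the equivalence by comparing the two even supersymmetric bilinear forms $\mathrm{ric}$ and $c\langle\cdot,\cdot\rangle$ blockwise with respect to the decomposition $\mathfrak{g}=\mathfrak{k}_{0}\oplus\mathfrak{k}_{1}\oplus\cdots\oplus\mathfrak{k}_{s}\oplus\mathfrak{g}_{\bar{1}}$, feeding in the explicit Ricci formula of Proposition \ref{3-prop-ric-tensor} and the shape of the metric in \eqref{Einstein-metric-form}. Since two bilinear forms agree iff they agree on every pair of summands, the single identity $\mathrm{ric}=c\langle\cdot,\cdot\rangle$ splits into one condition per block, and the whole argument reduces to matching coefficients and cancelling non-degenerate forms.

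First I would dispose of the off-diagonal pairs. For $i\neq j$, Proposition \ref{3-prop-ric-tensor} gives $\mathrm{ric}|_{\mathfrak{k}_{i}\times\mathfrak{k}_{j}}=0$, while $\langle\cdot,\cdot\rangle|_{\mathfrak{k}_{i}\times\mathfrak{k}_{j}}=x_{i}\mathbf{B}(\mathfrak{k}_{i},\mathfrak{k}_{j})=0$ because $\mathbf{B}$ pairs distinct ideals trivially; similarly both forms vanish on $\mathfrak{g}_{\bar{0}}\times\mathfrak{g}_{\bar{1}}$ as they are even. Hence the off-diagonal blocks are automatically consistent and impose no constraint, so only the diagonal blocks remain.

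On the diagonal blocks I would read off the three equations. On $\mathfrak{k}_{0}$ the required identity is $-\tfrac{x_{0}^{2}}{4}\mathbf{K}|_{\mathfrak{k}_{0}}=c\,x_{0}\mathbf{B}|_{\mathfrak{k}_{0}}$; dividing by the nonzero scalar $x_{0}$ (legitimate since $x_{i}\in\mathbb{R}^{\ast}$) gives the first equation of \eqref{Einstein equiivalent}, which must be kept as a bilinear-form identity because the abelian part offers no Schur-type proportionality between $\mathbf{K}|_{\mathfrak{k}_{0}}$ and $\mathbf{B}|_{\mathfrak{k}_{0}}$. On $\mathfrak{k}_{i}$ with $1\leq i\leq s$, using $\mathbf{B}|_{\mathfrak{k}_{i}}=b_{i}\mathbf{K}_{i}$ I rewrite $c\langle\cdot,\cdot\rangle|_{\mathfrak{k}_{i}}=c\,x_{i}b_{i}\mathbf{K}_{i}$, compare with $\mathrm{ric}|_{\mathfrak{k}_{i}}=\tfrac{1}{4}(l_{i}x_{i}^{2}-1)\mathbf{K}_{i}$, and cancel the non-degenerate Killing form $\mathbf{K}_{i}$ to obtain the scalar equation $\tfrac{1}{4}(l_{i}x_{i}^{2}-1)=c\,x_{i}b_{i}$. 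On $\mathfrak{g}_{\bar{1}}$ the Ricci block is $\sum_{i=0}^{s}(\tfrac{x_{i}}{2}-1)\mathbf{B}(X,\mathcal{C}_{\mathbf{B}|_{\mathfrak{k}_{i}}}(Y))$ and $c\langle X,Y\rangle=c\,\mathbf{B}(X,Y)$; since $\mathbf{B}|_{\mathfrak{g}_{\bar{1}}}$ is non-degenerate, equality for all $X,Y$ is equivalent to the operator identity $\sum_{i=0}^{s}(\tfrac{x_{i}}{2}-1)\mathcal{C}_{\mathbf{B}|_{\mathfrak{k}_{i}}}=c\,\mathrm{Id}|_{\mathfrak{g}_{\bar{1}}}$, the third equation. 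The converse direction follows by retracing these steps: assuming the three equations reconstructs $\mathrm{ric}=c\langle\cdot,\cdot\rangle$ on every block.

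Given the machinery already assembled, I do not anticipate a genuine obstacle; the statement is essentially a bookkeeping consequence of Proposition \ref{3-prop-ric-tensor}. The only points demanding care are the three places where non-degeneracy is invoked to pass from a bilinear-form equality to a scalar or operator equality, together with the subtlety that the $\mathfrak{k}_{0}$-block must be recorded as a form identity rather than a scalar one. Making the reduction-by-block explicit and justifying each cancellation is all that is needed.
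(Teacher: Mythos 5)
Your proposal is correct and matches the paper's (implicit) argument: Theorem \ref{3-thm-main} is stated there as an immediate consequence of Proposition \ref{3-prop-ric-tensor}, obtained precisely by the blockwise comparison of $\mathrm{ric}$ with $c\langle\cdot,\cdot\rangle$ that you carry out, dividing by $x_{0}\neq 0$ on $\mathfrak{k}_{0}$, cancelling the non-degenerate $\mathbf{K}_{i}$ on each simple ideal, and using non-degeneracy of $\mathbf{B}|_{\mathfrak{g}_{\bar{1}}}$ to pass to the operator identity on the odd part. Your remarks on the vanishing of the off-diagonal blocks and on keeping the $\mathfrak{k}_{0}$-equation as a form identity are exactly the right points of care.
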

\begin{rem}
Equation \eqref{Einstein equiivalent}  involves only $\mathbf{K}$ and $\mathbf{B}$. If we extend $\mathbf{B}$ naturally to a bi-invariant metric on the complex Lie superalgebra $\mathfrak{g}\otimes \mathbb{C}$, then \eqref{Einstein equiivalent} can be viewed as the Einstein equation for $\mathfrak{g}\otimes \mathbb{C}$.
If $\mathfrak{g}$  is a complex Lie superalgebra regarded as  real, then
 \eqref{Einstein equiivalent} is also  the Einstein equation for the complex  Lie superalgebra $\mathfrak{g}$.
\end{rem}
\begin{rem}
 Let $\Lambda$  denote the set comprising $\{0\}$, basic classical Lie
superalgebras and the one-dimensional Lie algebra. In \cite{ABB09,Benayadi00}, it is shown that
a Lie superalgebra $\mathfrak{g}=\mathfrak{g}_{\bar{0}}+\mathfrak{g}_{\bar{1}}$ with $\mathfrak{g}_{\bar{0}}$ reductive  admitting a non-degenerate even supersymmetric bilinear form $\mathbf{B}$ is either an
element of $\Lambda$ or obtained from a finite number of elements of $\Lambda$ by a finite sequence of double extensions by the one-dimensional Lie algebra, and/or generalized double extensions by the one-dimensional Lie superalgebra, and/or by orthogonal direct sums.
This implies that, except for basic classical Lie superalgebras and their direct sums, every quadratic Lie superalgebra $(\mathfrak{g},\mathbf{B})$ with $\mathfrak{g}_{\bar{0}}$ reductive has a non-zero homogeneous center,
i.e., $C(\mathfrak{g})\cap (\mathfrak{g}_{\bar{0}}\cup \mathfrak{g}_{\bar{1}})\neq 0$.
From  \eqref{Einstein equiivalent},  it follows that if $\mathfrak{g}$ is not an orthogonal direct
sum of  basic classical Lie superalgebras, then any Einstein metric $\langle\cdot,\cdot\rangle$ of the form \eqref{Einstein-metric-form} must be Ricci flat.
\end{rem}

\section{Classification}\label{sect4}
In this section, we study the Einstein equation \eqref{Einstein equiivalent} for  real basic classical Lie superalgebras. Note that a non-degenerate  even supersymmetric bi-invariant bilinear form on a real basic classical Lie superalgebra is unique up to a scaling.
\begin{lem}\label{4-lem-Casimir}
Let $\mathfrak{g}=\mathfrak{g}_{\bar{0}}+\mathfrak{g}_{\bar{1}}$ be a real basic classical Lie superalgebra.
 Then
\begin{eqnarray*}
\mathcal{C}_{\mathbf{B}|_{\mathfrak{k}_{i}}}=\frac{l_{i}\mathrm{dim}\,\mathfrak{k}_{i}}{b_{i}\mathrm{dim}\,\mathfrak{g}_{\bar{1}}}\mathrm{Id}|_{\mathfrak{g}_{\bar{1}}}, \, 1\leq i\leq s.
\end{eqnarray*}
Moreover, if the Killing form $\mathbf{K}$ of $\mathfrak{g}$ is non-degenerate and $\mathfrak{k}_{0}\neq 0$, then
\begin{eqnarray*}
\mathcal{C}_{\mathbf{K}|_{\mathfrak{k}_{0}}}=-\frac{\mathrm{dim}\,\mathfrak{k}_{0}}{\mathrm{dim}\,\mathfrak{g}_{\bar{1}}}\mathrm{Id}|_{\mathfrak{g}_{\bar{1}}}.
\end{eqnarray*}
\end{lem}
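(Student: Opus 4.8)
The plan is to reduce both assertions to a single principle: in each case the relevant Casimir operator is a scalar multiple of $\mathrm{Id}|_{\mathfrak{g}_{\bar 1}}$, and the scalar is pinned down by computing its trace. The trace computations are uniform and short, so the real work lies in verifying that one obtains a \emph{single} scalar, rather than merely a scalar on each irreducible constituent.

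First I would treat a simple ideal $\mathfrak{k}_i$ with $1\le i\le s$. Being a Casimir element, $\mathcal{C}_{\mathbf{B}|_{\mathfrak{k}_i}}$ commutes with $\mathrm{ad}\,\mathfrak{k}_i$, so by Schur's lemma it acts as a scalar on every $\mathfrak{k}_i$-irreducible summand of $\mathfrak{g}_{\bar 1}$. To see that this scalar is the same on all summands, I invoke the module structure recorded in Table~\ref{T2}: as a $\mathfrak{k}_i$-module, $\mathfrak{g}_{\bar 1}$ is a direct sum of copies of a single irreducible representation $W$ and, in the type~I cases, of copies of its contragredient $W^{*}$. Since $\dim W=\dim W^{*}$ and $l_{W}=l_{W^{*}}$, these two share the same $\mathbf{B}|_{\mathfrak{k}_i}$-Casimir eigenvalue (equivalently, one may use that $\mathcal{C}_{\mathbf{B}|_{\mathfrak{k}_i}}$ is $\mathbf{B}$-symmetric and that $\mathbf{B}$ pairs $W$ with $W^{*}$), whence $\mathcal{C}_{\mathbf{B}|_{\mathfrak{k}_i}}=\lambda_i\,\mathrm{Id}|_{\mathfrak{g}_{\bar 1}}$. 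Taking traces and using $\mathbf{B}|_{\mathfrak{k}_i}=b_i\mathbf{K}_i$ together with the defining relation $\mathrm{tr}\,(\mathrm{ad}\,X\circ\mathrm{ad}\,Y|_{\mathfrak{g}_{\bar 1}})=l_i\mathbf{K}_i(X,Y)$ of the index, one finds $\mathrm{tr}\,\mathcal{C}_{\mathbf{B}|_{\mathfrak{k}_i}}=\sum_j l_i\mathbf{K}_i(e_j,e_j^{*})=l_i\,\mathrm{dim}\,\mathfrak{k}_i/b_i$, so that $\lambda_i=l_i\,\mathrm{dim}\,\mathfrak{k}_i/(b_i\,\mathrm{dim}\,\mathfrak{g}_{\bar 1})$, as claimed.

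For the abelian part $\mathfrak{k}_0$ I would first observe that the hypotheses $\mathbf{K}$ non-degenerate and $\mathfrak{k}_0\neq0$ single out the type~I algebras $\mathbf{A}(m,n)$ with $m\ne n$ and $\mathbf{C}(n)$ (and their real forms), for which $\mathfrak{g}_{\bar 1}=\mathfrak{g}_{1}\oplus\mathfrak{g}_{-1}$ with $\mathfrak{g}_{-1}\cong\mathfrak{g}_{1}^{*}$ as $\mathfrak{g}_{\bar 0}$-modules and $\mathbf{K}|_{\mathfrak{k}_0}$ non-degenerate. Because $\mathfrak{k}_0$ is central in $\mathfrak{g}_{\bar 0}$, the operator $\mathcal{C}_{\mathbf{K}|_{\mathfrak{k}_0}}$ commutes with all of $\mathrm{ad}\,\mathfrak{g}_{\bar 0}$ and hence is scalar on each of $\mathfrak{g}_{1}$ and $\mathfrak{g}_{-1}$ by Schur; the two scalars coincide because $\mathcal{C}_{\mathbf{K}|_{\mathfrak{k}_0}}$ is $\mathbf{K}$-symmetric and $\mathbf{K}$ restricts to a non-degenerate pairing $\mathfrak{g}_{1}\times\mathfrak{g}_{-1}\to\mathbb{R}$. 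Finally, since $\mathrm{ad}\,X|_{\mathfrak{g}_{\bar 0}}=0$ for $X\in\mathfrak{k}_0$, the supertrace formula $\mathbf{K}(X,Y)=\mathrm{tr}\,(\cdot|_{\mathfrak{g}_{\bar 0}})-\mathrm{tr}\,(\cdot|_{\mathfrak{g}_{\bar 1}})$ gives $\mathrm{tr}\,(\mathrm{ad}\,X\circ\mathrm{ad}\,Y|_{\mathfrak{g}_{\bar 1}})=-\mathbf{K}(X,Y)$, so $\mathrm{tr}\,\mathcal{C}_{\mathbf{K}|_{\mathfrak{k}_0}}=-\,\mathrm{dim}\,\mathfrak{k}_0$ and the scalar equals $-\,\mathrm{dim}\,\mathfrak{k}_0/\mathrm{dim}\,\mathfrak{g}_{\bar 1}$.

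The main obstacle, in both parts, is precisely the passage from ``scalar on each irreducible piece'' to ``scalar on all of $\mathfrak{g}_{\bar 1}$''; everything else is a one-line trace computation. I expect the cleanest way to dispatch it is the observation that a representation and its contragredient have equal Casimir eigenvalue (used for $\mathfrak{k}_i$), together with the $\mathbf{K}$-duality between $\mathfrak{g}_{1}$ and $\mathfrak{g}_{-1}$ (used for $\mathfrak{k}_0$), both of which are read off from the explicit module structure in Table~\ref{T2}.
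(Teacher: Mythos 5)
Your proof is correct and follows essentially the same route as the paper's: Schur's lemma gives that each Casimir is scalar (the paper cites Kac's result that the irreducible $\mathfrak{k}_i$-constituents of $\mathfrak{g}_{\bar 1}$ are isomorphic or contragredient, exactly the point you verify via $l_{W}=l_{W^{*}}$ and the $\mathbf{B}$-duality), and the scalar is then fixed by the same trace computations, including the supertrace identity $\mathrm{tr}\,(\mathrm{ad}\,X\circ\mathrm{ad}\,Y|_{\mathfrak{g}_{\bar 1}})=-\mathbf{K}(X,Y)$ for $X,Y\in\mathfrak{k}_{0}$. The only cosmetic difference is in the $\mathfrak{k}_{0}$ case, where the paper simply notes that $\mathfrak{k}_{0}$ already acts by scalars on $\mathfrak{g}_{\pm1}$ (so the Casimir, being quadratic in $\mathrm{ad}$, is the same scalar on both), while you reach the same conclusion via $\mathbf{K}$-symmetry and the pairing of $\mathfrak{g}_{1}$ with $\mathfrak{g}_{-1}$.
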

\begin{proof}
By \cite[Proposition 2.1.2 and Proposition 5.3.1]{Kac},  the irreducible representation spaces of $\mathfrak{k}_{i}$ on $\mathfrak{g}_{\bar{1}}$ are either isomorphic or contragredient, so $\mathcal{C}_{\mathbf{B}|_{\mathfrak{k}_{i}}}=\lambda\mathrm{Id}|_{\mathfrak{g}_{\bar{1}}}$ for some $\lambda\in\mathbb{R}$.
This implies that
\begin{eqnarray*}
\lambda \mathrm{dim}\,\mathfrak{g}_{\bar{1}}=\mathrm{tr}\,\mathcal{C}_{\mathbf{B}|_{\mathfrak{k}_{i}}}
=\frac{l_{i}\mathrm{dim}\,\mathfrak{k}_{i}}{b_{i}},
\end{eqnarray*}
and thus $\lambda=\frac{l_{i}\mathrm{dim}\,\mathfrak{k}_{i}}{b_{i}\mathrm{dim}\,\mathfrak{g}_{\bar{1}}}$.

Moreover, if  $\mathbf{K}$ is non-degenerate and $\mathfrak{k}_{0}\neq 0$, then $\mathfrak{g}$ (or $\mathfrak{g}\otimes \mathbb{C}$) is isomorphic to $\mathbf{A}(m,n)$ with $m\neq n$ or $\mathbf{C}(n)$.
 The representation of $\mathfrak{k}_{0}$ on $\mathfrak{g}_{\bar{1}}$ is a multiple of identity map,
 so $\mathrm{tr}\,\mathcal{C}_{\mathbf{K}|_{\mathfrak{k}_{0}}}=-\mathrm{dim}\,\mathfrak{k}_{0}$,
 and hence $\mathcal{C}_{\mathbf{K}|_{\mathfrak{k}_{0}}}
=-\frac{\mathrm{dim}\,\mathfrak{k}_{0}}{\mathrm{dim}\,\mathfrak{g}_{\bar{1}}}\mathrm{Id}|_{\mathfrak{g}_{\bar{1}}}$.
\end{proof}
\begin{cor}\label{4-cor-K-B}
 Let $\mathfrak{g}=\mathfrak{g}_{\bar{0}}+\mathfrak{g}_{\bar{1}}$ be a real basic classical Lie superalgebra.

 (i) If $\mathbf{K}$ is non-degenerate, then
$$-\frac{\mathrm{dim}\,\mathfrak{k}_{0}}{\mathrm{dim}\,\mathfrak{g}_{\bar{1}}}
  +\sum_{i=1}^{s}\frac{l_{i}\mathrm{dim}\,\mathfrak{k}_{i}}{(1-l_{i})\mathrm{dim}\,\mathfrak{g}_{\bar{1}}}
  =\frac{1}{2}.$$

 (ii) If $\mathbf{K}\equiv0$, then
 $$ \sum_{i=1}^{s}\frac{l_{i}\mathrm{dim}\,\mathfrak{k}_{i}}{(1-l_{i})\mathrm{dim}\,\mathfrak{g}_{\bar{1}}}
  =0.$$
\end{cor}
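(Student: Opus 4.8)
The plan is to read both identities off the trace formula of Corollary~\ref{3-cor-B=K},
\[
\mathbf{K}(X,Y)=2\sum_{i=0}^{s}\mathbf{B}\bigl(X,\mathcal{C}_{\mathbf{B}|_{\mathfrak{k}_{i}}}(Y)\bigr),\qquad X,Y\in\mathfrak{g}_{\bar{1}},
\]
by substituting the explicit scalar values of the Casimir operators supplied by Lemma~\ref{4-lem-Casimir}. The recurring structural input is that $\mathbf{K}$ and $\mathbf{B}$ are even, so that non-degeneracy of either one on $\mathfrak{g}$ is equivalent to non-degeneracy of its restriction to $\mathfrak{g}_{\bar{1}}$; this is exactly what lets me cancel a common bilinear form from the two sides of the identity and turn it into a scalar equation.

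For part (i), non-degeneracy of $\mathbf{K}$ lets me take $\mathbf{B}=\mathbf{K}$, whence $b_{i}=1-l_{i}$ and Lemma~\ref{4-lem-Casimir} gives $\mathcal{C}_{\mathbf{K}|_{\mathfrak{k}_{i}}}=\frac{l_{i}\,\mathrm{dim}\,\mathfrak{k}_{i}}{(1-l_{i})\,\mathrm{dim}\,\mathfrak{g}_{\bar{1}}}\,\mathrm{Id}$ for $1\le i\le s$, together with $\mathcal{C}_{\mathbf{K}|_{\mathfrak{k}_{0}}}=-\frac{\mathrm{dim}\,\mathfrak{k}_{0}}{\mathrm{dim}\,\mathfrak{g}_{\bar{1}}}\,\mathrm{Id}$ when the Abelian ideal is present (which, by Lemma~\ref{4-lem-Casimir} and Kac's classification, happens only for $\mathbf{A}(m,n)$ with $m\neq n$, for $\mathbf{C}(n)$, and their real forms; otherwise $\mathrm{dim}\,\mathfrak{k}_{0}=0$ and the term drops out). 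Adding these scalars and inserting the sum into Corollary~\ref{3-cor-B=K} produces $\mathbf{K}|_{\mathfrak{g}_{\bar{1}}}=2\Lambda\,\mathbf{K}|_{\mathfrak{g}_{\bar{1}}}$, where $\Lambda$ is precisely the expression in the claim; cancelling the non-degenerate form $\mathbf{K}|_{\mathfrak{g}_{\bar{1}}}$ forces $\Lambda=\tfrac12$.

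For part (ii) the left-hand side of Corollary~\ref{3-cor-B=K} is identically zero, while $\mathbf{B}$ remains a genuine non-degenerate form because $\mathfrak{g}$ is basic classical. The three families with $\mathbf{K}\equiv0$, namely $\mathbf{A}(n,n)$, $\mathbf{D}(n+1,n)$, $\mathbf{D}(2,1;\alpha)$ and their real forms, all have $\mathfrak{k}_{0}=0$, so only the simple ideals contribute. Substituting the scalars $\mathcal{C}_{\mathbf{B}|_{\mathfrak{k}_{i}}}$ from Lemma~\ref{4-lem-Casimir} into $0=2\sum_{i=1}^{s}\mathbf{B}(X,\mathcal{C}_{\mathbf{B}|_{\mathfrak{k}_{i}}}(Y))$ and cancelling the non-degenerate $\mathbf{B}|_{\mathfrak{g}_{\bar{1}}}$ yields the asserted vanishing sum.

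The step I expect to require the most care is the passage to part (ii). Since $\mathbf{K}\equiv0$ forces $1-l_{i}=0$ (equivalently $l_{i}=1$) for every simple ideal, one can no longer take $\mathbf{B}=\mathbf{K}$, and the Casimir eigenvalues must be recorded with the genuine constants $b_{i}$ coming from the non-degenerate $\mathbf{B}$ rather than with $1-l_{i}$. One therefore needs the separate structural fact, read off from Table~\ref{T2} and Kac's classification, that the Abelian ideal $\mathfrak{k}_{0}$ is absent in exactly these $\mathbf{K}\equiv0$ families, so that no Abelian contribution survives. Once these points are in place, both parts reduce to a single substitution followed by cancellation of a non-degenerate form.
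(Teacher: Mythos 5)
Your proof is correct and follows exactly the paper's (one-line) argument: substitute the Casimir eigenvalues of Lemma \ref{4-lem-Casimir} into the trace identity of Corollary \ref{3-cor-B=K}, use that the forms are even so non-degeneracy passes to $\mathfrak{g}_{\bar{1}}$, and cancel the non-degenerate form to obtain the scalar identity. Your remark on part (ii) --- that $\mathbf{K}\equiv 0$ forces $l_{i}=1$, so the eigenvalues must be written with the constants $b_{i}$ of the genuine non-degenerate $\mathbf{B}$ rather than with $1-l_{i}$ --- correctly identifies a point the paper's statement glosses over (as literally written, its denominators $1-l_{i}$ vanish in case (ii)), and your handling of it is the right one.
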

\begin{proof}
 This follows from Corollary \ref{3-cor-B=K}, Lemma \ref{4-lem-Casimir}, and the fact that $\mathfrak{k}_{0}=0$ if $\mathbf{K}\equiv0$.
\end{proof}
Combining Theorem \ref{3-thm-main}, Lemma \ref{4-lem-Casimir} and Corollary \ref{4-cor-K-B}, we obtain
\begin{thm}\label{4-thm-B=K}
 Suppose $\mathbf{K}$ is non-degenerate and let $\mathbf{B}=\mathbf{K}$.
  Then  equation \eqref{Einstein equiivalent} becomes
\begin{eqnarray}\label{B=K equivalent}
\left\{
\begin{aligned}
&c=-\frac{x_{0}}{4},\quad \mathrm{if}\, \mathfrak{k}_{0}\neq0, \\
&\frac{1}{4}(l_{i}x_{i}^{2}-1)=c (1-l_{i})x_{i},\quad 1\leq i\leq s,\\
&-\frac{\mathrm{dim}\,\mathfrak{k}_{0}}{\mathrm{dim}\,\mathfrak{g}_{\bar{1}}}x_{0}
+\sum\limits_{i=1}^{s}\frac{l_{i}\mathrm{dim}\,\mathfrak{k}_{i}}{(1-l_{i})\mathrm{dim}\,\mathfrak{g}_{\bar{1}}}x_{i}=2c+1.
\end{aligned}\right.
\end{eqnarray}
\end{thm}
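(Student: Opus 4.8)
The plan is to substitute the hypothesis $\mathbf{B}=\mathbf{K}$ directly into the three equations of \eqref{Einstein equiivalent} furnished by Theorem \ref{3-thm-main} and to simplify each one in turn, using the explicit scalar forms of the Casimir operators from Lemma \ref{4-lem-Casimir}, the index normalization $b_i=1-l_i$ valid when $\mathbf{B}=\mathbf{K}$ (recorded in Section \ref{sect3}), and the dimension identity of Corollary \ref{4-cor-K-B}(i). Since the first two of the resulting equations are immediate substitutions, the only genuine content lies in reducing the Casimir equation.

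For the first equation, setting $\mathbf{B}=\mathbf{K}$ turns $-\tfrac{x_0}{4}\mathbf{K}|_{\mathfrak{k}_0}=c\,\mathbf{B}|_{\mathfrak{k}_0}$ into $-\tfrac{x_0}{4}\mathbf{K}|_{\mathfrak{k}_0}=c\,\mathbf{K}|_{\mathfrak{k}_0}$. Because $\mathbf{K}$ is non-degenerate, its restriction $\mathbf{K}|_{\mathfrak{k}_0}$ is non-degenerate, hence a nonzero bilinear form, whenever $\mathfrak{k}_0\neq 0$; cancelling it gives $c=-\tfrac{x_0}{4}$, while for $\mathfrak{k}_0=0$ both this equation and the parameter $x_0$ drop out, matching the qualifier in \eqref{B=K equivalent}. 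For the second equation, the relation $b_i=1-l_i$ converts $\tfrac14(l_i x_i^2-1)=c x_i b_i$ into $\tfrac14(l_i x_i^2-1)=c(1-l_i)x_i$ verbatim.

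The core step is the third equation $\sum_{i=0}^s(\tfrac{x_i}{2}-1)\mathcal{C}_{\mathbf{B}|_{\mathfrak{k}_i}}=c\,\mathrm{Id}|_{\mathfrak{g}_{\bar{1}}}$. Here I would invoke Lemma \ref{4-lem-Casimir} to replace each Casimir operator by a scalar multiple of the identity, namely $\tfrac{l_i\dim\mathfrak{k}_i}{(1-l_i)\dim\mathfrak{g}_{\bar{1}}}$ for $1\leq i\leq s$ and $-\tfrac{\dim\mathfrak{k}_0}{\dim\mathfrak{g}_{\bar{1}}}$ for the $i=0$ term when $\mathfrak{k}_0\neq0$. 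After cancelling $\mathrm{Id}|_{\mathfrak{g}_{\bar{1}}}$ this becomes a scalar identity; separating the terms linear in the $x_i$ from the constant terms, the linear part collects to $\tfrac12\big(-\tfrac{\dim\mathfrak{k}_0}{\dim\mathfrak{g}_{\bar{1}}}x_0+\sum_{i=1}^s\tfrac{l_i\dim\mathfrak{k}_i}{(1-l_i)\dim\mathfrak{g}_{\bar{1}}}x_i\big)$, while the constant part is $\tfrac{\dim\mathfrak{k}_0}{\dim\mathfrak{g}_{\bar{1}}}-\sum_{i=1}^s\tfrac{l_i\dim\mathfrak{k}_i}{(1-l_i)\dim\mathfrak{g}_{\bar{1}}}$, which equals $-\tfrac12$ by Corollary \ref{4-cor-K-B}(i). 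Multiplying the whole identity by $2$ then yields exactly the third equation of \eqref{B=K equivalent}.

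The main obstacle is really just careful bookkeeping rather than a substantive difficulty: one must track that the $i=0$ Casimir scalar from Lemma \ref{4-lem-Casimir} carries the opposite sign to the $i\geq1$ ones, and verify that the final formula is uniform across the cases $\mathfrak{k}_0\neq0$ and $\mathfrak{k}_0=0$ (where the $x_0$ term is absent and $\dim\mathfrak{k}_0=0$), so that the collapse of the constant part via Corollary \ref{4-cor-K-B}(i) is legitimate in both situations.
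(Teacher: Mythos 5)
Your proposal is correct and takes essentially the same route as the paper, whose entire proof is the one-line remark that the theorem follows by combining Theorem \ref{3-thm-main}, Lemma \ref{4-lem-Casimir} and Corollary \ref{4-cor-K-B}; your substitution of the scalar Casimir values (with the sign flip on the $\mathfrak{k}_{0}$ term), the use of $b_{i}=1-l_{i}$, and the collapse of the constant term to $-\tfrac{1}{2}$ via Corollary \ref{4-cor-K-B}(i) are exactly the computations the paper leaves implicit.
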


Combining Proposition \ref{2-prop-index}, Theorem \ref{3-thm-main} and Lemma \ref{4-lem-Casimir},
we obtain the following reduction principle, which allows us to relate the Einstein equation for a real basic classical Lie superalgebra to that for its complexification.
\begin{prop}\label{4-prop-solution-real-complex}
Let $\mathfrak{g}\otimes\mathbb{C}$ be a complex basic classical Lie superalgebra, and let $\mathfrak{g}=\mathfrak{g}_{\bar{0}}+\mathfrak{g}_{\bar{1}}$ be a real form of
$\mathfrak{g}\otimes\mathbb{C}$ with decomposition  $\mathfrak{g}_{\bar{0}}=\mathfrak{k}_{0}\oplus\mathfrak{k}_{1}\oplus\cdots\oplus\mathfrak{k}_{s}$ as above. Suppose
$\mathfrak{k}_{i}\otimes\mathbb{C}$ is simple for  $1\leq i\leq r$,
and $\mathfrak{k}_{j}\otimes\mathbb{C}=\mathfrak{k}'_{j}\oplus\mathfrak{k}''_{j}$ is a direct sum of two ideals
for $r+1\leq j\leq s$. Then, for a given bi-invariant metric $\mathbf{B}$ on $\mathfrak{g}$
 (naturally extended to $\mathfrak{g}\otimes\mathbb{C}$),
$(x_{0},x_{1},\ldots,x_{s})$ is a solution of equation \eqref{Einstein equiivalent} for $\mathfrak{g}$ if and only if
$(x_{0},x_{1},\ldots,x_{r},x_{r+1},x_{r+1},\ldots,x_{s},x_{s})$ is a solution of equation \eqref{Einstein equiivalent} for $\mathfrak{g}\otimes\mathbb{C}$.
\end{prop}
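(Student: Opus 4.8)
The plan is to reduce \eqref{Einstein equiivalent} to an equivalent finite system of scalar equations and then to match this system, block by block, with the one attached to $\mathfrak{g}\otimes\mathbb{C}$. By Theorem \ref{3-thm-main} together with Lemma \ref{4-lem-Casimir}, every Casimir operator $\mathcal{C}_{\mathbf{B}|_{\mathfrak{k}_{i}}}$ acts as a scalar $\lambda_{i}=\frac{l_{i}\,\mathrm{dim}\,\mathfrak{k}_{i}}{b_{i}\,\mathrm{dim}\,\mathfrak{g}_{\bar 1}}$ on $\mathfrak{g}_{\bar 1}$, so \eqref{Einstein equiivalent} is equivalent to the $\mathfrak{k}_{0}$-relation $-\frac{x_{0}}{4}\mathbf{K}|_{\mathfrak{k}_{0}}=c\,\mathbf{B}|_{\mathfrak{k}_{0}}$, the quadratic relations $\frac{1}{4}(l_{i}x_{i}^{2}-1)=c\,x_{i}b_{i}$ for $1\leq i\leq s$, and the single scalar trace relation $\sum_{i=0}^{s}(\frac{x_{i}}{2}-1)\lambda_{i}=c$. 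The identical reduction applies to $\mathfrak{g}\otimes\mathbb{C}$, whose even part carries the refined decomposition with $\mathfrak{k}_{j}\otimes\mathbb{C}=\mathfrak{k}'_{j}\oplus\mathfrak{k}''_{j}$ for $r+1\leq j\leq s$; here each split piece contributes its own quadratic relation and its own trace term. It therefore suffices to check that, under the substitution $x_{j}\mapsto(x_{j},x_{j})$, the numerical data $(l,b,\mathrm{dim},\lambda)$ of the two systems agree block by block.

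For the abelian block $\mathfrak{k}_{0}$ and each unsplit block $\mathfrak{k}_{i}$ ($1\leq i\leq r$) the matching is immediate: since $\mathbf{B}$ and the Killing forms extend complex-bilinearly, the proportionality constant relating $\mathbf{K}$ and $\mathbf{B}$ on a block is unchanged, the real and complex dimensions of a real form agree ($\mathrm{dim}_{\mathbb{R}}\mathfrak{g}=\mathrm{dim}_{\mathbb{C}}(\mathfrak{g}\otimes\mathbb{C})$, and likewise for $\mathfrak{g}_{\bar 1}$ and for each $\mathfrak{k}_{i}$), and $l_{\mathfrak{k}_{i}}=l_{\mathfrak{k}_{i}\otimes\mathbb{C}}$ by Proposition \ref{2-prop-index}(i). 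Hence each such block contributes an identical $\mathfrak{k}_{0}$- or quadratic relation and an identical trace term on both sides.

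The crux is the split blocks $\mathfrak{k}_{j}$ ($r+1\leq j\leq s$), where $\mathfrak{k}_{j}$ is a complex simple algebra regarded as real. Since $\mathfrak{k}'_{j}$ and $\mathfrak{k}''_{j}$ are $\mathbf{B}$-orthogonal ideals whose direct sum is the complexification of $\mathfrak{k}_{j}$, a $\mathbf{B}$-dual basis of $\mathfrak{k}_{j}\otimes\mathbb{C}$ splits into dual bases of the two pieces, so the Casimir element of $\mathfrak{k}_{j}\otimes\mathbb{C}$ is the sum of those of $\mathfrak{k}'_{j}$ and $\mathfrak{k}''_{j}$; as this total Casimir is the complex-linear extension of the real scalar operator $\lambda_{j}\,\mathrm{Id}$, one gets $\lambda'_{j}+\lambda''_{j}=\lambda_{j}$. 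Because the real Killing form extends complex-bilinearly to the Killing form of $\mathfrak{k}_{j}\otimes\mathbb{C}$, restriction to each ideal gives $b'_{j}=b''_{j}=b_{j}$; the conjugation symmetry interchanging the two conjugate ideals forces $\mathrm{dim}_{\mathbb{C}}\mathfrak{k}'_{j}=\mathrm{dim}_{\mathbb{C}}\mathfrak{k}''_{j}=\frac{1}{2}\mathrm{dim}_{\mathbb{R}}\mathfrak{k}_{j}$ and $\lambda'_{j}=\lambda''_{j}=\frac{1}{2}\lambda_{j}$, while Proposition \ref{2-prop-index}(ii) yields $l'_{j}=l''_{j}=l_{j}$. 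Consequently, putting $x'_{j}=x''_{j}=x_{j}$ makes each quadratic relation $\frac{1}{4}(l'_{j}(x'_{j})^{2}-1)=c\,x'_{j}b'_{j}$ coincide with the real relation $\frac{1}{4}(l_{j}x_{j}^{2}-1)=c\,x_{j}b_{j}$, and makes the two trace terms sum to $(\frac{x_{j}}{2}-1)(\lambda'_{j}+\lambda''_{j})=(\frac{x_{j}}{2}-1)\lambda_{j}$, exactly the real contribution. Matching all blocks then gives the claimed equivalence. I expect the main obstacle to be precisely this last step: the careful bookkeeping of real-versus-complex dimensions and the proof that the real Casimir splits into two equal halves across the conjugate ideals, for which the $\mathbf{B}$-orthogonality of $\mathfrak{k}'_{j},\mathfrak{k}''_{j}$ and the conjugation symmetry are the decisive ingredients.
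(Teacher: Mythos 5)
Your proposal is correct and follows essentially the same route as the paper, which simply asserts the result ``by combining Proposition \ref{2-prop-index}, Theorem \ref{3-thm-main} and Lemma \ref{4-lem-Casimir}'': you reduce \eqref{Einstein equiivalent} to scalar relations via the Casimir eigenvalues and match the data block by block, with the split blocks handled by the additivity of the Casimir over the two conjugate ideals and the equality of their indices and dimensions. Your write-up supplies the bookkeeping the paper leaves implicit, but introduces no new idea beyond the cited ingredients.
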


Finally, in this work, we classify all real solutions of equation \eqref{Einstein equiivalent} for each complex basic classical Lie superalgebra  using Maple. By virtue of Proposition 4.4, the corresponding real solutions for real basic classical Lie superalgebras can be directly obtained,
thereby completing the proof of Theorem \ref{1-thm-main}.

1. Case $\mathbf{A}(m,n)$:
\begin{eqnarray*}
\mathfrak{g}=\mathbf{A}(m,n)=\mathfrak{sl}(m+1,n+1; \mathbb{C})=
\left\{\left(\begin{array}{cc}X&Y\\
                          Z&W
\end{array}\right)\bigg{|}\mathrm{tr}\,X-\mathrm{tr}\,W=0\right\}, m\neq n,m,n\geq 0.
\end{eqnarray*}
$$\mathfrak{g}_{\bar{0}}=\left\{\left(\begin{array}{cc}X&0\\
                          0&W
\end{array}\right)\bigg{|}\mathrm{tr}\,X-\mathrm{tr}\,W=0\right\}
=\mathbb{C}\oplus \mathbf{A}_{m}\oplus \mathbf{A}_{n},
\quad \mathfrak{g}_{\bar{1}}=\left\{\left(\begin{array}{cc}0&Y\\
                          Z&0
\end{array}\right)\right\},$$
 the Lie superbracket is  $[X,Y]=XY-(-1)^{[X][Y]}YX$, $\forall X,Y\in \mathbf{A}(m,n)$.

  We have
  $\mathrm{dim}\,\mathfrak{k}_{0}=1$, $\mathrm{dim}\,\mathfrak{k}_{1}=(m+1)^{2}-1$, $\mathrm{dim}\,\mathfrak{k}_{2}=(n+1)^{2}-1$, $\mathrm{dim}\,\mathfrak{g}_{\bar{1}}=2(m+1)(n+1)$, $l_{1}=\frac{n+1}{m+1}$, $l_{2}=\frac{m+1}{n+1}$.
Then
\begin{eqnarray*}
\left\{
\begin{aligned}
&\frac{\mathrm{dim}\,\mathfrak{k}_{0}}{\mathrm{dim}\,\mathfrak{g}_{\bar{1}}}=\frac{1}{2(m+1)(n+1)},\\
&\frac{l_{1}\mathrm{dim}\,\mathfrak{k}_{1}}{(1-l_{1})\mathrm{dim}\,\mathfrak{g}_{\bar{1}}}=\frac{m(m+2)}{2(m-n)(m+1)},\\
&\frac{l_{2}\mathrm{dim}\,\mathfrak{k}_{2}}{(1-l_{2})\mathrm{dim}\,\mathfrak{g}_{\bar{1}}}=\frac{n(n+2)}{2(n-m)(n+1)}.
\end{aligned}
\right.
\end{eqnarray*}

Let $\mathbf{B}=\mathbf{K}$ and then equation \eqref{B=K equivalent} becomes
\begin{eqnarray*}
\left\{
\begin{aligned}
&c=-\frac{x_{0}}{4},\\
&\frac{1}{4}\left(\frac{n+1}{m+1}x_{1}^{2}-1\right)=c\left(1-\frac{n+1}{m+1}\right)x_{1},\\
&\frac{1}{4}\left(\frac{m+1}{n+1}x_{2}^{2}-1\right)=c\left(1-\frac{m+1}{n+1}\right)x_{2},\\
&-\frac{1}{2(m+1)(n+1)}\cdot x_{0}+\frac{m(m+2)}{2(m-n)(m+1)}\cdot x_{1}+\frac{n(n+2)}{2(n-m)(n+1)}\cdot x_{2}=2c+1.
\end{aligned}\right.
\end{eqnarray*}
There is a unique real solution $(x_{0},x_{1},x_{2},c)=\left(1,1,1,-\frac{1}{4}\right)$.

2. Case $\mathbf{A}(n,n)$:
\begin{eqnarray*}
\mathfrak{g}=\mathbf{A}(n,n)=\mathfrak{sl}(n+1,n+1; \mathbb{C})/\langle I_{2n+2}\rangle,n\geq 0.
\end{eqnarray*}
We have
$\mathfrak{g}_{\bar{0}}=\mathbf{A}_{n}\oplus \mathbf{A}_{n}$, $\mathrm{dim}\,\mathfrak{k}_{1}=\mathrm{dim}\,\mathfrak{k}_{2}=(n+1)^{2}-1$, $\mathrm{dim}\,\mathfrak{g}_{\bar{1}}=2(n+1)^{2}$.
 Define $\mathbf{B}$  on $\mathbf{A}(n,n)$  by $\mathbf{B}(\widetilde{X},\widetilde{Y})=2(n+1)\mathrm{str}\,(XY)$
for  $X,Y\in \mathfrak{sl}(n+1,n+1; \mathbb{C})$. Then $\mathbf{B}|_{\mathfrak{k}_{1}}=\mathbf{K}_{1}$ and  $\mathbf{B}|_{\mathfrak{k}_{2}}=-\mathbf{K}_{2}$, so $b_{1}=1$, $b_{2}=-1$. Since  $l_{1}=l_{2}=1$, we have
\begin{eqnarray*}
&&\mathcal{C}_{\mathbf{B}|_{\mathfrak{k}_{1}}}=\frac{\mathrm{dim}\,\mathfrak{k}_{1}}{\mathrm{dim}\,\mathfrak{g}_{\bar{1}}}\mathrm{Id}|_{\mathfrak{g}_{\bar{1}}}
                                            =\frac{n(n+2)}{2(n+1)^{2}}\mathrm{Id}|_{\mathfrak{g}_{\bar{1}}},\\
&&\mathcal{C}_{\mathbf{B}|_{\mathfrak{k}_{2}}}=-\frac{\mathrm{dim}\,\mathfrak{k}_{2}}{\mathrm{dim}\,\mathfrak{g}_{\bar{1}}}\mathrm{Id}|_{\mathfrak{g}_{\bar{1}}}
                                              =-\frac{n(n+2)}{2(n+1)^{2}}\mathrm{Id}|_{\mathfrak{g}_{\bar{1}}}.
\end{eqnarray*}
Equation (\ref{Einstein equiivalent}) becomes
\begin{eqnarray*}
\left\{
\begin{aligned}
&\frac{1}{4}(x_{1}^{2}-1)=c x_{1},\\
&\frac{1}{4}(x_{2}^{2}-1)=-c x_{2},\\
&\frac{n(n+2)}{4(n+1)^{2}}(x_{1}-x_{2})=c.
\end{aligned}\right.
\end{eqnarray*}
There are two real solutions $(x_{1},x_{2},c)=(1,1,0),(-1,-1,0)$.

3. Case $\mathbf{B}(m,n)$:
\begin{eqnarray*}
\mathfrak{g}=\mathbf{B}(m,n)=\mathfrak{osp}(2m+1,2n)=\mathfrak{osp}(2m+1,2n)_{\bar{0}}+\mathfrak{osp}(2m+1,2n)_{\bar{1}},m\geq 0,n> 0,
\end{eqnarray*}
where
\begin{eqnarray*}
\mathfrak{osp}(l,k)_{s}=\{A\in\mathfrak{gl}(l,k;\mathbb{C})_{s}|F(A(X),Y)=-(-1)^{s\cdot[X]}F(X,A(Y))\}, s\in \mathbb{Z}_{2},
\end{eqnarray*}
and $F$ is a non-degenerate even supersymmetric bilinear form on $\mathbb{C}^{l|k}$.

We have $\mathfrak{g}_{\bar{0}}=\mathbf{B}_{m}\oplus \mathbf{C}_{n}$, $\mathrm{dim}\,\mathfrak{k}_{1}=(2m+1)m$, $\mathrm{dim}\,\mathfrak{k}_{2}=(2n+1)n$, $\mathrm{dim}\,\mathfrak{g}_{\bar{1}}=2n(2m+1)$, $l_{1}=\frac{2n}{2m-1}$, $l_{2}=\frac{2m+1}{2n+2}$. Then
\begin{eqnarray*}
\frac{l_{1}\mathrm{dim}\,\mathfrak{k}_{1}}{(1-l_{1})\mathrm{dim}\,\mathfrak{g}_{\bar{1}}}=\frac{m}{2m-2n-1},\quad
\frac{l_{2}\mathrm{dim}\,\mathfrak{k}_{2}}{(1-l_{2})\mathrm{dim}\,\mathfrak{g}_{\bar{1}}}=-\frac{2n+1}{2(2m-2n-1)}.
\end{eqnarray*}
Let $\mathbf{B}=\mathbf{K}$ and  equation \eqref{B=K equivalent} becomes
\begin{eqnarray*}
\left\{
\begin{aligned}
&\frac{1}{4}\left(\frac{2n}{2m-1}x_{1}^{2}-1\right)=c\left(1-\frac{2n}{2m-1}\right)x_{1},\\
&\frac{1}{4}\left(\frac{2m+1}{2n+2}x_{2}^{2}-1\right)=c\left(1-\frac{2m+1}{2n+2}\right)x_{2},\\
&\frac{m}{2m-2n-1}x_{1}-\frac{2n+1}{2(2m-2n-1)}x_{2}=2c+1.
\end{aligned}\right.
\end{eqnarray*}
Solving this system yields
\begin{eqnarray*}
\left\{
\begin{aligned}
&c=\frac{2nx_{1}^{2}-2m+1}{4(2m-2n-1)x_{1}},\\
&x_{2}=\frac{2(m-n)x_{1}^{2}-2(2m-2n-1)x_{1}+2m-1}{(2n+1)x_{1}},
\end{aligned}\right.
\end{eqnarray*}
where $x_{1}$ satisfies  the quartic equation
\begin{eqnarray*}
(x_{1}-1)(A x_{1}^{3}+B x_{1}^{2}+C x_{1}+D)=0,
\end{eqnarray*}
with coefficients
\begin{eqnarray*}
\left\{
\begin{aligned}
&A=2\left( 2m^{3}+(-4n+1)m^{2}+n(4n-1)m-2n^{3}\right),\\
&B=-2\left( 6m^{3}-(12n+1)m^{2}+(8n^{2}-n-2)m-2n^{3}+n\right),\\
&C=(2m-1)\left( 6m^{2}-(10n+1)m+4n^{2}-n-1\right),\\
&D=(2m-1)^{2}(n-m).
\end{aligned}\right.
\end{eqnarray*}
Since $A+B+C+D=0$,
the equation admits at least two distinct real solutions.

4. Case $\mathbf{C}(n)$:
\begin{eqnarray*}
\mathfrak{g}=\mathbf{C}(n)=\mathfrak{osp}(2,2n-2),n\geq 3.
\end{eqnarray*}
We have
$\mathfrak{g}_{\bar{0}}=\mathbb{C}\oplus \mathbf{C}_{n-1}$, $\mathrm{dim}\,\mathfrak{k}_{0}=1$, $\mathrm{dim}\,\mathfrak{k}_{1}=(n-1)(2n-1)$, $\mathrm{dim}\,\mathfrak{g}_{\bar{1}}=4(n-1)$, $l_{1}=\frac{1}{n}$. Then
\begin{eqnarray*}
\frac{\mathrm{dim}\,\mathfrak{k}_{0}}{\mathrm{dim}\,\mathfrak{g}_{\bar{1}}}=\frac{1}{4(n-1)},\quad
\frac{l_{1}\mathrm{dim}\,\mathfrak{k}_{1}}{(1-l_{1})\mathrm{dim}\,\mathfrak{g}_{\bar{1}}}=\frac{2n-1}{4(n-1)}.
\end{eqnarray*}
Let $\mathbf{B}=\mathbf{K}$ and  equation (\ref{B=K equivalent}) becomes
\begin{eqnarray*}
\left\{
\begin{aligned}
&c=-\frac{x_{0}}{4},\\
&\frac{1}{4}\left(\frac{1}{n}x_{1}^{2}-1\right)=c\left(1-\frac{1}{n}\right)x_{1},\\
&-\frac{1}{4(n-1)}x_{0}+\frac{2n-1}{4(n-1)}x_{1}=2c+1.
\end{aligned}\right.
\end{eqnarray*}
There are two real solutions
$(x_{0},x_{1},c)=\left(1,1,-\frac{1}{4}\right)$ and
$$\left(\frac{4n^{3}-20 n^{2}+33n-16}{4n^{3}-16n^{2}+23n-12},
\frac{2n^{2}-3n}{2n^{2}-5n+4},
\frac{-4n^{3}+20 n^{2}-33n+16}{4(4n^{3}-16 n^{2}+23n-12)}\right).$$

5. Case $\mathbf{D}(m,n)$, $m-n\neq 1$:
\begin{eqnarray*}
\mathfrak{g}=\mathbf{D}(m,n)=\mathfrak{osp}(2m,2n),m\geq 2,n>0.
\end{eqnarray*}
We have
$\mathfrak{g}_{\bar{0}}=\mathbf{D}_{m}\oplus \mathbf{C}_{n}$, $\mathrm{dim}\,\mathfrak{k}_{1}=m(2m-1)$, $\mathrm{dim}\,\mathfrak{k}_{2}=n(2n+1)$, $\mathrm{dim}\,\mathfrak{g}_{\bar{1}}=4mn$, $l_{1}=\frac{n}{m-1}$, $l_{2}=\frac{m}{n+1}$. Then
\begin{eqnarray*}
\frac{l_{1}\mathrm{dim}\,\mathfrak{k}_{1}}{(1-l_{1})\mathrm{dim}\,\mathfrak{g}_{\bar{1}}}=\frac{2m-1}{4(m-n-1)},\quad
\frac{l_{2}\mathrm{dim}\,\mathfrak{k}_{2}}{(1-l_{2})\mathrm{dim}\,\mathfrak{g}_{\bar{1}}}=-\frac{2n+1}{4(m-n-1)}.
\end{eqnarray*}
Let $\mathbf{B}=\mathbf{K}$ and equation \eqref{B=K equivalent} becomes
\begin{eqnarray*}
\left\{
\begin{aligned}
&\frac{1}{4}\left(\frac{n}{m-1}x_{1}^{2}-1\right)=c\left(1-\frac{n}{m-1}\right)x_{1},\\
&\frac{1}{4}\left(\frac{m}{n+1}x_{2}^{2}-1\right)=c\left(1-\frac{m}{n+1}\right)x_{2},\\
&\frac{2m-1}{4(m-n-1)}x_{1}-\frac{2n+1}{4(m-n-1)}x_{2}=2c+1.
\end{aligned}\right.
\end{eqnarray*}
Solving this system yields
\begin{eqnarray*}
\left\{
\begin{aligned}
&c=\frac{nx_{1}^{2}-m+1}{4(m-n-1)x_{1}},\\
&x_{2}=\frac{(2m-2n-1)x_{1}^{2}-4(m-n-1)x_{1}+2(m-1)}{(2n+1)x_{1}},
\end{aligned}\right.
\end{eqnarray*}
where  $x_{1}$ satisfies the quartic equation
\begin{eqnarray*}
(x_{1}-1)(A x_{1}^{3}+B x_{1}^{2}+C x_{1}+D)=0,
\end{eqnarray*}
with coefficients
\begin{eqnarray*}
\left\{
\begin{aligned}
&A=4m^{3}-4(2n+1)m^{2}+( 8n^{2}+6n+1)m-n(2n+1)^{2},\\
&B=-12m^{3}+4(6n+5)m^{2}-(16n^{2}+22n+7)m+n(4n^{2}+8n+3),\\
&C=2(m-1)\left( 6m^{2}-m(10n+7)+(2n+1)^{2}\right),\\
&D=2(m-1)^{2}(2n-2m+1).
\end{aligned}\right.
\end{eqnarray*}
The condition $m\neq n$ yields $A+B+C+D\neq0$,
so there are at least two   distinct real solutions of equation \eqref{B=K equivalent}.

6. Case $\mathbf{D}(n+1,n)$, $n\neq 1.$
We have
$\mathfrak{g}_{\bar{0}}=\mathfrak{so}(2n+2)\oplus \mathfrak{sp}(2n)$, $\mathrm{dim}\,\mathfrak{k}_{1}=(n+1)(2n+1)$, $\mathrm{dim}\,\mathfrak{k}_{2}=n(2n+1)$, $\mathrm{dim}\,\mathfrak{g}_{\bar{1}}=4n(n+1)$, $l_{1}=l_{2}=1$.
 Define $\mathbf{B}$  on $\mathbf{D}(n+1,n)$  by $\mathbf{B}(X,Y)=2n\,\mathrm{str}\,(XY)$. Then $\mathbf{B}|_{\mathfrak{k}_{1}}=\mathbf{K}_{1}$,  $\mathbf{B}|_{\mathfrak{k}_{2}}=-\frac{n}{n+1}\mathbf{K}_{2}$, so $b_{1}=1$, $b_{2}=-\frac{n}{n+1}$. We have
\begin{eqnarray*}
&&\mathcal{C}_{\mathbf{B}|_{\mathfrak{k}_{1}}}=\frac{\mathrm{dim}\,\mathfrak{k}_{1}}{\mathrm{dim}\,\mathfrak{g}_{\bar{1}}}\mathrm{Id}|_{\mathfrak{g}_{\bar{1}}}
                                            =\frac{2n+1}{4n}\mathrm{Id}|_{\mathfrak{g}_{\bar{1}}},\\
&&\mathcal{C}_{\mathbf{B}|_{\mathfrak{k}_{2}}}=-\frac{n+1}{n}\frac{\mathrm{dim}\,\mathfrak{k}_{2}}{\mathrm{dim}\,\mathfrak{g}_{\bar{1}}}\mathrm{Id}|_{\mathfrak{g}_{\bar{1}}}
                                              =-\frac{2n+1}{4n}\mathrm{Id}|_{\mathfrak{g}_{\bar{1}}}.
\end{eqnarray*}
Equation (\ref{Einstein equiivalent}) becomes
\begin{eqnarray*}
\left\{
\begin{aligned}
&\frac{1}{4}(x_{1}^{2}-1)=c x_{1},\\
&\frac{1}{4}(x_{2}^{2}-1)=-\frac{n}{n+1}c x_{2},\\
&\frac{2n+1}{8n}(x_{1}-x_{2})=c.
\end{aligned}\right.
\end{eqnarray*}
There are four real solutions $(x_{1},x_{2},c)=\pm(1,1,0)$ and
$$\pm\left(
                  \frac{\sqrt{2}n}{\sqrt{2n^{2}+2n+1}},
                   \frac{\sqrt{2}(n+1)}{\sqrt{2n^{2}+2n+1}},
                    -\frac{\sqrt{2}(2n+1)}{8n\sqrt{2n^{2}+2n+1}}
                  \right).$$

7. Case $\mathbf{D}(2,1;\alpha)$,  $\alpha\in \mathbb{C}\setminus\{0,-1\}$.
We have
$\mathfrak{g}_{\bar{0}}=\mathfrak{so}(4,\mathbb{C})\oplus \mathfrak{sl}(2,\mathbb{C})=\mathbf{A}_{1}\oplus \mathbf{A}_{1}\oplus \mathbf{A}_{1}$, $\mathrm{dim}\,\mathfrak{k}_{1}=\mathrm{dim}\,\mathfrak{k}_{2}=\mathrm{dim}\,\mathfrak{k}_{3}=3$, $\mathrm{dim}\,\mathfrak{g}_{\bar{1}}=8$, $l_{1}=l_{2}=l_{3}=1$.
Define $\mathbf{B}$  on $\mathbf{D}(2,1;\alpha)$ by  $\mathbf{B}(X,Y)=2\,\mathrm{str}\,(XY)$.
 Then $\mathbf{B}|_{\mathfrak{k}_{1}}=\mathbf{K}_{1}$, $\mathbf{B}|_{\mathfrak{k}_{2}}=\mathbf{K}_{2}$, $\mathbf{B}|_{\mathfrak{k}_{3}}=-\frac{1}{2}\mathbf{K}_{3}$, so $b_{1}=b_{2}=1$, $b_{3}=-\frac{1}{2}$. We have
\begin{eqnarray*}
&&\mathcal{C}_{\mathbf{B}|_{\mathfrak{k}_{1}}}=\mathcal{C}_{\mathbf{B}|_{\mathfrak{k}_{2}}}=\frac{3}{8}\mathrm{Id}|_{\mathfrak{g}_{\bar{1}}},\\
&&\mathcal{C}_{\mathbf{B}|_{\mathfrak{k}_{3}}}=(-2)\frac{\mathrm{dim}\,\mathfrak{k}_{3}}{\mathrm{dim}\,\mathfrak{g}_{\bar{1}}}\mathrm{Id}|_{\mathfrak{g}_{\bar{1}}}
                                              =-\frac{3}{4}\mathrm{Id}|_{\mathfrak{g}_{\bar{1}}}.
\end{eqnarray*}
Equation (\ref{Einstein equiivalent}) becomes
\begin{eqnarray*}
\left\{
\begin{aligned}
&\frac{1}{4}(x_{1}^{2}-1)=c x_{1},\\
&\frac{1}{4}(x_{2}^{2}-1)=c x_{2},\\
&\frac{1}{4}(x_{3}^{2}-1)=-\frac{1}{2}c x_{3},\\
&\frac{3}{16}(x_{1}+x_{2}-2x_{3})=c.
\end{aligned}\right.
\end{eqnarray*}
There are four real solutions
$(x_{1},x_{2},x_{3},c)=\pm(1,1,1,0), \pm\left(\sqrt{\frac{2}{5}},\sqrt{\frac{2}{5}},2\sqrt{\frac{2}{5}},-\frac{3}{8}\sqrt{\frac{2}{5}}\right).$

8. Case $\mathbf{F}(4)$:
$$\mathfrak{g}=\mathbf{F}(4),\quad
\mathfrak{g}_{\bar{0}}=\mathbf{B}_{3}\oplus \mathbf{A}_{1},\quad \mathfrak{g}_{\bar{0}}|\mathfrak{g}_{\bar{1}}=\mathfrak{spin}(7)\otimes\mathfrak{sl}(2).$$
We have $\mathrm{dim}\,\mathfrak{k}_{1}=21$, $\mathrm{dim}\,\mathfrak{k}_{2}=3$, $\mathrm{dim}\,\mathfrak{g}_{\bar{1}}=16$, $l_{1}=\frac{2}{5}$, $l_{2}=2$. Then
\begin{eqnarray*}
\frac{l_{1}\mathrm{dim}\,\mathfrak{k}_{1}}{(1-l_{1})\mathrm{dim}\,\mathfrak{g}_{\bar{1}}}=\frac{7}{8},\quad
\frac{l_{2}\mathrm{dim}\,\mathfrak{k}_{2}}{(1-l_{2})\mathrm{dim}\,\mathfrak{g}_{\bar{1}}}=-\frac{3}{8}.
\end{eqnarray*}
Let $\mathbf{B}=\mathbf{K}$ and then equation \eqref{B=K equivalent} becomes
\begin{eqnarray*}
\left\{
\begin{aligned}
&\frac{1}{4}\left(\frac{2}{5}x_{1}^{2}-1\right)=\frac{3}{5}c x_{1},\\
&\frac{1}{4}(2 x_{2}^{2}-1)=-c x_{2},\\
&\frac{7}{8}x_{1}-\frac{3}{8}x_{2}=2c+1.
\end{aligned}\right.
\end{eqnarray*}
There is a unique real solution $(x_{1},x_{2},c)=\left(1,1,-\frac{1}{4}\right)$.

9. Case $\mathbf{G}(3)$:
$$\mathfrak{g}=\mathbf{G}(3),\quad
\mathfrak{g}_{\bar{0}}=\mathfrak{g}_{2}\oplus \mathbf{A}_{1},\quad
\mathfrak{g}_{\bar{0}}|\mathfrak{g}_{\bar{1}}=\mathfrak{g}_{2}\otimes\mathfrak{sl}(2).$$
We have  $\mathrm{dim}\,\mathfrak{k}_{1}=14$, $\mathrm{dim}\,\mathfrak{k}_{2}=3$, $\mathrm{dim}\,\mathfrak{g}_{\bar{1}}=14$, $l_{1}=\frac{1}{2}$, $l_{2}=\frac{7}{4}$. Then
\begin{eqnarray*}
\frac{l_{1}\mathrm{dim}\,\mathfrak{k}_{1}}{(1-l_{1})\mathrm{dim}\,\mathfrak{g}_{\bar{1}}}=1,\quad
\frac{l_{2}\mathrm{dim}\,\mathfrak{k}_{2}}{(1-l_{2})\mathrm{dim}\,\mathfrak{g}_{\bar{1}}}=-\frac{1}{2}.
\end{eqnarray*}
Let $\mathbf{B}=\mathbf{K}$ and then equation \eqref{B=K equivalent} becomes
\begin{eqnarray*}
\left\{
\begin{aligned}
&\frac{1}{4}\left(\frac{1}{2}x_{1}^{2}-1\right)=\frac{1}{2}c x_{1},\\
&\frac{1}{4}\left(\frac{7}{4} x_{2}^{2}-1\right)=-\frac{3}{4}c x_{2},\\
&x_{1}-\frac{1}{2}x_{2}=2c+1.
\end{aligned}\right.
\end{eqnarray*}
There are two real solutions $(x_{1},x_{2},c)=(1,1,-\frac{1}{4})$ and
 $(x_{1},x_{2},c)\approx (1.1760,0.8767,0.1312).$

%\textbf{Acknowledgements}\quad We are deeply grateful to the reviewers of this paper for very careful reading and useful suggestions.

\end{document}